\newtheorem{theorem}{Theorem}[section]
\newtheorem{remark}[theorem]{Remark}
\newtheorem{lemma}[theorem]{Lemma}
\newtheorem{proposition}[theorem]{Proposition}
\newtheorem{definition}[theorem]{Definition}
\begin{document}

\begin{frontmatter}
% \title[short text for running head]{full title}
\title{Convergence analysis of Galerkin finite element approximations to shape gradients in eigenvalue optimization\tnoteref{mytitlenote}}
\tnotetext[mytitlenote]{This work was supported in part by Science and Technology Commission of Shanghai Municipality (No. 18dz2271000) and the National Natural Science Foundation of China under grants 11201153 and 11571115.}

\author[sfzhu]{Shengfeng Zhu\corref{cor1}}
\address[sfzhu]{Department of Data Mathematics \& Shanghai Key Laboratory of Pure Mathematics and Mathematical Practice, School of Mathematical Sciences, East China Normal University, Shanghai 200241, China}
%\fntext[email1]{sfzhu@math.ecnu.edu.cn}
\ead{sfzhu@math.ecnu.edu.cn}
\cortext[cor1]{Corresponding author}

\author[xlhu]{Xianliang Hu}
\address[xlhu]{School of Mathematical Sciences, Zhejiang University, Hangzhou 310027, Zhejiang, China}
%\fntext[email2]{xlhu@zju.edu.cn}
\ead{xlhu@zju.edu.cn}

%    Abstract is required.
\begin{abstract}
Numerical computation of shape gradients from Eulerian derivatives is essential to wildly used gradient type methods in shape optimization. Boundary type Eulerian derivatives are popularly used in literature. The volume type Eulerian derivatives hold more generally, but are rarely noticed and used numerically. We investigate thoroughly the accuracy of Galerkin finite element approximations of the two type shape gradients for optimization of elliptic eigenvalues. Under certain regularity assumptions on domains, we show \emph{a priori} error estimates for the two approximate shape gradients. The convergence analysis shows that the volume integral formula converges faster and generally offers better accuracy. Numerical experiments verify theoretical results for the Dirichlet case. For the Neumann case, however, the boundary formulation surprisingly converges as fast as the volume one. Numerical results are presented.
\end{abstract}

\begin{keyword}
Shape optimization, shape gradient, eigenvalue problem, error estimate, finite element, multiple eigenvalue
\end{keyword}

\end{frontmatter}

\section{Introduction}
With the development of computer sciences, shape optimization has become important and promising in many fields of engineering (cf \cite{Defour,HN,HP,Pironneau,Sok1}), e.g., structural mechanics \cite{Bendsoe}, acoustics \cite{Osher2}, computational fluid dynamics (see e.g., \cite{JHH,Pironneau,QR}), etc. Analytic approaches can be applied to study existence and regularity \cite{BB,Defour}. For shape design of complex systems in practice, numerical methods such as gradient-type algorithms are usually employed instead to seek ``approximately'' optimal shapes. One can adopt two strategies: \emph{optimize-then-discretize} \cite{HN,Sok1} and \emph{discretize-then-optimize} \cite{Bendsoe}. They are not equivalent at certain circumstances. For the former, the so-called \emph{Eulerian derivative} and corresponding \emph{shape gradient} of a \emph{shape functional} are usually required by \emph{shape sensitivity analysis} for sensitivity calculation of shape functionals with respect to domain variations (see e.g., \cite{Defour,Sok1}). In 1907, Hadamard computed the Eulerian derivative of the first eigenvalue of a clamped plate with $C^\infty$ smooth boundary \cite{Hadamard}. Later, a \emph{structure theorem} was developed by Zol\'{e}sio for general shape functionals on $C^{\zeta+1}$-domains ($\zeta\geq 0$). By the structure theorem, the Eulerian derivative can be expressed as a boundary integral. Due to the attractive concise appearance, this type Eulerian derivative has caused much attention both in shape optimization theory \cite{Defour,Sok1} and most existing numerical shape gradient algorithms (see e.g. \cite{Pironneau}). But this type Eulerian derivative actually fails to hold when the boundary does not satisfy the required smoothness. Another more general type Eulerian derivative expressed as a domain integral then can be used instead \cite{Defour}. The two type Eulerian derivatives are equivalent through integration by parts if the boundary is regular enough.

For shape gradient computations, numerical approaches such as finite elements, finite differences (see e.g. \cite{ZhuJCP}) and boundary element methods \cite{AF} are used to solve the state and possible adjoint constraints. Galerkin finite element method (cf. \cite{Brenner}) is popular for discretizations of PDEs in shape optimization (see e.g. \cite{HN,LiuZhu,WHZ,ZhuANM,ZHW}). This method based on domain triangulation is flexible to shape representation and shape changes in shape optimization.

The accuracy of finite element approximations of shape gradients seems to be essential for implementation of numerical optimization algorithms. Delfour and Zol\'{e}sio (Remark 2.3 on pp. 531 \cite{Defour}) pointed that ``\emph{the boundary integral expression is not suitable since the finite element solution does not have the appropriate smoothness under which the boundary integral formula is obtained}''. Pironneau et al. (pp 210 \cite{MP}) presented convergence analysis for consistent approximations of boundary shape gradients in linear elliptic problems. Bergren \cite{Berggren} remarked that ``\emph{the sensitivity information-directional derivatives of objective functions and constraints needs to be very accurately computed in order for the optimization algorithms to fully converge}''. The use of domain expressions of Eulerian derivatives seems to be more promising. Recently, Hiptmair et al. \cite{Hiptmair} first showed that the Galerkin finite element approximation of shape gradient in the volume integral type converges faster and is more accurate than that in the boundary integral for linear elliptic problems. The volume formulations of shape gradients are derived and used for numerical shape optimization algorithms in magnetic induction tomography \cite{HLY} and parabolic diffusion problems \cite{SSW}, respectively. Shape gradients are popular in boundary form when combined with the level set method for shape optimization (see e.g. \cite{Allaire,Burger,LDZKL,LiuZhu,Osher2}). Recently, the volume type Eulerian derivatives were also incorporated numerically into the level set method for shape and topology optimization \cite{Burman,Laurain}. In \cite{Kiniger}, an eigenvalue shape optimization problem was transformed to be an optimal control problem and a priori error estimates were obtained after finite element discretizations.

In this paper, we prove convergence for Galerkin finite element approximations of shape gradients in eigenvalue optimization. The motivations arise from the following aspects. First, eigenvalue problems in optimal shape design have fundamental importance for science and engineering, especially in structural mechanics (see. e.g., \cite{Allaire2,Bendsoe,Defour,HenriBook,Rousselet83,Sok1,ZhuJCP}). Second, finite element approximations to shape gradients of eigenvalues in boundary formulations are wildly used for numerical algorithms in eigenvalue optimization (see e.g., \cite{Allaire2,AF,Defour,HenriBook,Osher2,Oudet,Sok1} and references therein). Third, numerical evidence shows that the potential advantages for using the \emph{new} volume shape gradients in algorithms for shape optimization of Laplace eigenvalue problems \cite{ZhuJOTA}. To the best of our knowledge, there is no literature reported on convergence analysis of the approximate shape gradients in boundary/volume formulation for eigenvalue problems. For both Dirichlet and Neumann Laplace eigenvalue problems \cite{AF,BB,HenriBook,KS,LNS,Oudet}, we prove convergence of the finite element approximations to shape gradients in both boundary and volume formulations. \emph{A priori} error estimates are presented first in an infinite-dimensional operator norm. Numerical results are presented for verifying convergence of approximate shape gradients.

The rest of the paper is organized as follows. Laplace eigenvalue problems with Dirichlet and Neumann boundary conditions are presented in Section 2. Shape calculus is briefly introduced to give the Eulerian derivatives of eigenvalues in the forms of boundary and volume integrals. In Section 3, under certain regularity assumptions on domains, we present \emph{a priori} convergence analysis of the finite element approximations to shape gradients in boundary and volume formulations. In Section 4, numerical results are presented for verifying convergence of approximate shape gradients as well as effectiveness of shape gradient algorithms in shape optimization. Brief conclusions are drawn in Section 5.

\section{Problem formulation}
Let $\Omega$ be a bounded domain in $\mathbb{R}^d$ $(d=2,3)$ with Lipschitz continuous boundary $\partial\Omega$. We consider the Laplace eigenvalue problem:
\begin{equation}\label{EigModel}
\left\{
\begin{aligned}
&-\Delta u =\lambda u  &&{\rm in}\ \Omega\\
&u=0\quad {\rm or}\quad \frac{\partial u}{\partial n}=0\  &&{\rm on} \ \partial \Omega,
\end{aligned}
\right.
\end{equation}
where $\Delta=\sum_{i=1}^d \partial^2/\partial x_i^2$ is the Laplacian. The homogeneous Dirichlet or Neumann boundary condition physically corresponds to a vibrating planar membrane ($d=2$) being fixed or free, respectively.

Let us first introduce briefly some notations on Sobolev spaces \cite{Adams}. For $1\leq p\leq \infty$, the Banach space $L^p(\Omega)$ consists of measurable functions $v$ such that the associated norm $\|v\|_{L^p(\Omega)}<\infty$.
For each integer $m\geq 0$, the Banach space
%\begin{equation*}
%W^{m,p}(\Omega):=\{v\in L^p(\Omega)| D^\alpha v\in L^p(\Omega),\ \forall\alpha,\ |\alpha|\leq m\}
%\end{equation*}
is equipped with the norm $\|\cdot\|_{W^{m,p}(\Omega)}$.
%\begin{equation*}
%\|v\|_{W^{m,p}(\Omega)}:=
%\left\{
%\begin{aligned}
%&\bigg(\sum_{|\alpha|\leq m}\|D^\alpha v\|_{L^p(\Omega)}^p \bigg)^{1/p} &&{\rm if}\ 1\leq p<\infty\\
%&\max_{|\alpha|\leq m}\|D^\alpha v\|_{L^\infty(\Omega)} &&{\rm if}\  p=\infty.
%\end{aligned}
%\right.
%\end{equation*}
In particular, when $p=2$, we write $H^m(\Omega)$ instead of $W^{m,2}(\Omega)$ and $\|\cdot\|_{H^m(\Omega)}$ instead of $\|\cdot\|_{W^{m,2}(\Omega)}$. Notice that $H^m(\Omega)$ is indeed a Hilbert space with respect to the scalar product
%\begin{equation*}
$(w,v)_{H^m(\Omega)}:=\sum_{|\alpha|\leq m}(D^\alpha w,D^\alpha v)_{L^2(\Omega)}$
%\end{equation*}
with $(\cdot,\cdot)_{L^2(\Omega)}$ being the usual $L^2$ inner product. Denote by $W^{m,p}_0(\Omega)$ the closure of $C^\infty_0(\Omega)$ with respect to the norm $\|\cdot\|_{W^{m,p}(\Omega)}$.
We write $H^m_0(\Omega)$ instead of $W^{m,2}_0(\Omega)$ when $p=2$. For readability, we use the same notations for Sobolev norms of vector-valued and scalar functions.

The variational formulation of (\ref{EigModel}) is to find $\lambda\in \mathbb{R}, 0\neq u\in V $ such that \cite{Babuska}
\begin{equation}\label{varForm}
(\nabla u,\nabla v) =\lambda(u,v)\quad \forall v\in V,
\end{equation}
where $V=H^1_0(\Omega)$ ($V=\{v\in H^1(\Omega)|\int_\Omega v {\rm d}x = 0\}$) for the Dirichlet (Neumann) boundary condition. Due to the positiveness, self-adjointness and the compactness of the inverse of negative Laplacian operator, there exists a sequence of eigenpairs $\{(\lambda_i,u_i)\}_{i=0}^\infty$ as solutions of (\ref{varForm}) with eigenvalues
\begin{equation}\label{order}
0<\lambda_1\leq\lambda_2\leq\cdots\nearrow+\infty
\end{equation}
and corresponding eigenfunctions
%\begin{equation*}
$u_1,u_2,\cdots$,
%\end{equation*}
which can be normalized with %respect to $\|\cdot\|_{L^2(\Omega)}$, i.e.,
\begin{equation}\label{contNorm}
%(\nabla u_i,\nabla u_j)=\lambda_j
(u_i,u_j)=\delta_{ij},
\end{equation}
where $\delta_{ij}$ is the Kronecker delta. Note that for the case of Neumann boundary condition, $\lambda_1\geq 0$. A typical optimization problem consists of minimizing some eigenvalue of (\ref{order}) subject to a volume constraint \cite{AF,BB,HenriBook,KS,Oudet}.
%\begin{equation}\label{Model}
%\min_{|\Omega|=\mathcal{C}}\ \lambda\ \ ({\rm Dirichlet})\ \quad{\rm and} \quad \max_{|\Omega|=\mathcal{C}}\ \lambda\ \ ({\rm Neumann}),
%\end{equation}
%where $\mathcal{C}>0$ is a prescribed number.

%In the following, we simplify the $i$th eigenpair $(\lambda_i,u_i)$ with $(\lambda,u)$ by omitting the subscript $i$.

\subsection{{\emph A priori} error estimates for Laplace eigenvalue problems}\label{FEM}
We consider the standard Ritz-Galerkin finite element method \cite{Brenner} for discretization and approximation of the variational formulation (\ref{varForm}) \cite{Boffi}. For the shape gradient deformation algorithm we shall present, the domain $\Omega$ here at each iteration is naturally assumed to be a polygon/polyhedron, which can be triangulated exactly with no geometric error introduced.
\begin{remark}\label{regulartity}
For Dirichlet Laplace eigenvalue problems on planar polygonals (see Remark 4.2 \cite{Babuska}), we have the following regularity result similar as linear elliptic problems \cite{Grisvard}. Let $\omega\pi$ ($0<\omega\leq 2$) be the maximal interior angle of the vertices of $\Omega$. Then, we have the for a Laplace eigenfunction $u$ such that
\begin{equation*}
u\in H^{r}(\Omega)\cap H^1_0(\Omega)\quad {\rm with}\quad 1< r<1+\frac{1}{\omega},
\end{equation*}
i.e., $r$ can be $1+\frac{1}{\omega}-\epsilon$ for any small $\epsilon>0$. If $\Omega$ is a convex polygon, then $u\in H^2(\Omega)$. For $d=3$, more delicated discussions required to be made for the Poisson equation and we assume that $u\in H^r(\Omega)\cap H^1_0(\Omega)$ ($r>1$) for simplicity.
\end{remark}

Consider a family of triangulations $\{\mathcal{T}_h\}_{h>0}$ satisfying that $\overline{\Omega}=\cup_{K\in\mathcal{T}_h}\overline{K}$, where the mesh size $h:=\max_{K\in\mathcal{T}_h}{h_K}$ with $h_K:={\rm diam}\{K\}$ for any $K\in\mathcal{T}_h$. Let $\{V_h\}_{h>0}$ be a family of finite-dimensional subspaces of $H^1_0(\Omega)$. For the linear Lagrange elements, $V_h:=\{v_h\in C^0(\overline{\Omega})\cap H^1_0(\Omega):v_h|_K\in \mathbb{P}_1(K)\ \forall K\in \mathcal{T}_h\}$ in the Dirichlet case with $\mathbb{P}_1(K)$ denoting the set of piecewise linear polynomials on $K$ and $V_h=\{v_h\in C^0(\overline{\Omega}):v_h|_K\in \mathbb{P}_1(K)\ \forall K\in \mathcal{T}_h,\int_\Omega v_h {\rm d}x = 0\}$ in the Neumann case. Denote $r:=1+s$ with $0< s\leq 1$. Throughout, we shall denote by $C$ a general constant, which may differ at different occurrences and may depend on  the mesh aspect ratio and the shape of $\Omega$, but it is always independent of eigenfunction and the mesh size $h$. We assume that the mesh family $\{\mathcal{T}_h\}_{h>0}$ is \emph{regular} so that the following approximation property holds \cite{Brenner}:
\begin{equation}\label{approximation}
\inf_{v_h\in V_h}(\|u-v_h\|_{L^2(\Omega)}+h\|\nabla u-\nabla v_h\|_{L^2(\Omega)} ) \leq C h^{r}|u|_{H^r(\Omega)}\quad \forall u\in H^r(\Omega).
\end{equation}
 Suppose moreover that the mesh is \emph{quasi-uniform}, i.e.,
\begin{equation*}
\min_{K\in\mathcal{T}_h} h_K\geq C h\quad \forall h>0,
\end{equation*}
based on which the \emph{inverse inequality} holds (see e.g. Theorem 4.5.11 \cite{Brenner}). The weak formulation for conforming finite element approximation of the problem (\ref{varForm}) reads: find $\lambda_h\in\mathbb{R}$ and $0\neq u_h\in V_h $ such that
\begin{equation}\label{FEMvarForm}
(\nabla u_h,\nabla v_h) =\lambda_h(u_h, v_h) \quad \forall v_h\in V_h,
\end{equation}
For (\ref{FEMvarForm}), there exist a finite sequence of eigenvalues
\begin{equation*}
0<\lambda_{1,h}\leq\lambda_{2,h}\leq\cdots\leq \lambda_{N,h},\quad N={\rm dim}\ V_h,
\end{equation*}
and corresponding eigenvectors
\begin{equation*}
u_{1,h},u_{2,h}\cdots u_{N,h},
\end{equation*}
which can be assumed to satisfy
\begin{equation}\label{discNorm}
%(\nabla u_{i,h},\nabla u_{j,h}) =\lambda_{j,h}
(u_{i,h}, u_{j,h})=\delta_{ij}.
\end{equation}
For $i=1,2,\cdots$, we suppose that $k_i$ is the lowest index of the $i$th distinct eigenvalue of (\ref{varForm}) with $l_i$ being its multiplicity. More precisely, suppose that
\begin{equation*}
\lambda_{k_{i-1}+l_{i-1}-1} = \lambda_{k_{i}-1}< \lambda_{k_{i}}=\lambda_{k_{i}+1}=\cdots = \lambda_{k_{i}+l_i-1}<\lambda_{k_{i}+l_i}=\lambda_{k_{i+1}}.
\end{equation*}
We have the following \emph{a priori} error estimates on approximate eigenvalues and eigenfunctions.
\begin{lemma}\label{basicErrEst}
Assume that $\Omega$ is a polygon/polyhedron and $\{\mathcal{T}_h\}_{h>0}$ are quasi-uniform. Let $(\lambda_{k_i+j-1},u_{k_i+j-1})$ and $(\lambda_{k_i+j-1,h},u_{k_i+j-1,h})$ be an eigenpair of (\ref{varForm}) and (\ref{FEMvarForm}), respectively with $u_{k_i+j-1}\in H^{1+s}(\Omega)$ ($0<s\leq 1$). Then,
\begin{equation*}\label{lambdaEst}
\lambda_{k_i+j-1} \leq \lambda_{k_i+j-1,h}\leq \lambda_{k_i+j-1}+ Ch^{2s}|u_{k_i+j-1}|_{H^{1+s}(\Omega)},
%\quad j=1,\cdots,q_i,\quad i=1,2,\cdots
\end{equation*}
and $u_1,u_2,\cdots$ can be chosen so that (\ref{discNorm}) holds and
\begin{equation*}\label{uErr}
\|u_{k_i+j-1}- u_{k_i+j-1,h}\|_{L^2(\Omega)}+h\|\nabla u_{k_i+j-1}- \nabla u_{k_i+j-1,h}\|_{L^2(\Omega)}\leq Ch^{1+s}|u_{k_i+j-1}|_{H^{1+s}(\Omega)},
\end{equation*}
\begin{equation*}\label{uuee}
\|\nabla u_{k_i+j-1}- \nabla u_{k_i+j-1,h}\|_{H^{-s}(\Omega)}\leq Ch^{2s}|u_{k_i+j-1}|_{H^{1+s}(\Omega)},
\end{equation*}
where $j=1,2,\cdots,q_i$ with $i=1,2,\cdots$.
%If $\lambda$ is a multiple eigenvalue of multiplicity $q>1$, then
%\begin{equation}
%\|u_i- u_{i,h}\|_{L^2(\Omega)}+h\|\nabla u_i- \nabla u_{i,h}\|_{L^2(\Omega)}\leq C(h^\mu/p^{k-1})\|u\|_{H^k(\Omega)},
%\end{equation}
%where $\{u_i\}_{i=1}^q$ and $\{u_{i,h}\}_{i=1}^q$ are the eigenfunctions and their corresponding finite element approximations, respectively.
\end{lemma}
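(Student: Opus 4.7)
The plan is to proceed within the classical Babu\v{s}ka-Osborn framework for Galerkin approximation of compact self-adjoint eigenvalue problems, supplemented by C\'ea-type energy estimates, the approximation property (\ref{approximation}), and duality arguments.

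First, for the two-sided bound on $\lambda_{k_i+j-1,h}$, the left inequality follows immediately from the Courant-Fischer min-max characterization applied to the conforming subspace $V_h\subset V$. The right inequality is obtained by choosing, for each $i$, an $l_i$-dimensional test subspace $W_h\subset V_h$ built by Lagrange-interpolating a basis of the continuous eigenspace for $\lambda_{k_i}$, inserting it into the $l_i$-dimensional Rayleigh quotient, and using (\ref{approximation}) together with the assumed $u\in H^{1+s}$ regularity. The approximation error in the numerator scales as $h^{2s}$, while the denominator is bounded below by the $L^2$-normalization once $h$ is small enough.

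Second, for the eigenfunction estimates, I would introduce the source operator $T:L^2(\Omega)\to V$ sending $f$ to the weak solution of $-\Delta(Tf)=f$ (with the appropriate zero-mean condition in the Neumann case) and its Galerkin analogue $T_h:L^2(\Omega)\to V_h$. Since $T$ is compact self-adjoint and $\|T-T_h\|_{\mathcal{L}(L^2,L^2)}\to 0$ by (\ref{approximation}), the Babu\v{s}ka-Osborn spectral-convergence theorem yields convergence of eigenprojections. The $H^1$-bound then comes from C\'ea's lemma applied on the eigenspace, while the $L^2$-bound is an Aubin-Nitsche duality argument: one tests against the solution of an auxiliary Poisson problem whose $H^{1+s}$-regularity is controlled by $\|u_{k_i+j-1}-u_{k_i+j-1,h}\|_{L^2}$, which furnishes the extra factor $h^{s}$ beyond the energy estimate.

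For the negative-norm gradient bound, I would write $\|\nabla(u-u_h)\|_{H^{-s}(\Omega)}=\sup(\nabla(u-u_h),\vec{\phi})/\|\vec{\phi}\|_{H^s}$ over smooth vector test fields and use duality: either integrate by parts to reduce the estimate to $\|u-u_h\|_{H^{1-s}(\Omega)}\|\vec{\phi}\|_{H^s}$ and then interpolate between the already-proved $L^2$ and $H^1$ bounds (the exponents $s(1+s)+(1-s)s=2s$ match exactly), or solve an auxiliary vector-source problem whose solution is in $H^{1+s}$ with norm $\le C\|\vec{\phi}\|_{H^s}$, insert the discrete solution via Galerkin orthogonality, and apply the energy estimate twice so that each factor contributes $h^s$.

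The main obstacle I anticipate is the consistent alignment of continuous and discrete eigenfunctions when the eigenvalue $\lambda_{k_i}$ has multiplicity $l_i>1$: raw spectral convergence only provides proximity of the entire discrete invariant subspace to the continuous eigenspace, not of individual basis vectors. One has to choose an orthogonal rotation within the $l_i$-dimensional eigenspaces so that (\ref{discNorm}) holds simultaneously with the per-index error bounds, and then verify that the lower-order terms arising from the spectral shift $\lambda_{k_i+j-1,h}-\lambda_{k_i+j-1}=O(h^{2s})$ can be absorbed into the claimed estimates through the extra factor gained in the duality step.
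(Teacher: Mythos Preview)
Your outline is sound. For the eigenvalue bound and the $L^2$/$H^1$ eigenfunction estimates the paper does not reprove anything: it simply cites Knyazev--Osborn and Babu\v{s}ka--Osborn, which is exactly the framework you are sketching, so there is no real divergence there (your discussion of aligning bases in the multiple case is the content of those references).

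For the negative-norm gradient estimate your route differs from the paper's. The paper first proves the endpoint case $s=1$ directly: for $v\in H_0^1(\Omega)^d$ one integrates by parts, $\langle\nabla u-\nabla u_h,v\rangle=(u-u_h,\mathrm{div}\,v)$, so $\|\nabla u-\nabla u_h\|_{H^{-1}}\le C\|u-u_h\|_{L^2}\le Ch^2|u|_{H^2}$; this is paired with the trivial bound $\|\nabla u-\nabla u_h\|_{L^2}\le C|u|_{H^1}$, and then an \emph{operator} interpolation theorem (Tartar) is invoked between $H^1\to L^2$ and $H^2\to H^{-1}$ to reach $H^{1+s}\to H^{-s}$. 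Your first alternative instead bounds $\|\nabla(u-u_h)\|_{H^{-s}}\le C\|u-u_h\|_{H^{1-s}}$ and interpolates the \emph{fixed error function} $u-u_h$ between its already-available $L^2$ and $H^1$ bounds. Both yield $Ch^{2s}$, but your version has two advantages: it never requires the eigenfunction to lie in $H^2$ at an intermediate stage, and it avoids the delicate point that $u\mapsto u_h$ is not a linear map, so the ``operator'' being interpolated in the paper's argument is not obviously admissible for the standard operator-interpolation theorem. The paper's route, on the other hand, makes the $s=1$ case completely explicit via the one-line integration-by-parts identity, which is convenient later when that endpoint is used on its own.
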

\begin{proof}
By combining (3.25) \cite{KO} and Theorem 5.1 \cite{BO1989}, we obtain the first two \emph{a priori} error estimates above. To prove the last inequality, we first have
\begin{equation}\label{wefwfefw}
\begin{aligned}
  \|\nabla u -\nabla u_h\|_{H^{-1}(\Omega)}&=\sup_{0\neq v\in H_0^{1}(\Omega)^d}\frac{\langle \nabla u-\nabla u_h, v \rangle}{\|v\|_{H^{1}(\Omega)}}\\
  &=\sup_{0\neq v\in H_0^{1}(\Omega)^d}\frac{(u-u_h, {\rm div} v)}{\|v\|_{H^{1}(\Omega)}}\\
  &\leq \|u-u_h\|_{L^2(\Omega)}\sup_{0\neq v\in H_0^{1}(\Omega)^d}\frac{\|{\rm div} v\|_{L^2(\Omega)}}{\|v\|_{H^{1}(\Omega)}}\\
  &\leq C\|u-u_h\|_{L^2(\Omega)}\\
  &\leq Ch^2|u|_{H^2(\Omega)}\quad {\rm for}\ u\in H^2(\Omega),
\end{aligned}
\end{equation}
where we have omitted the subscript $k_i+j-1$ for notational simplicity.
On the other hand,
\begin{equation}\label{fewfw}
  \|\nabla u- \nabla u_h\|_{L^2(\Omega)}\leq C |u|_{H^1(\Omega)}\quad {\rm for}\ u\in H^1(\Omega).
\end{equation}
Using the operator interpolation theorem (Lemma 22.3 \cite{Tartar}) to (\ref{wefwfefw}) and (\ref{fewfw}), we have
\begin{equation*}
  \|\nabla u- \nabla u_h\|_{H^{-s}(\Omega)}\leq C h^{2s}|u|_{H^{1+s}(\Omega)}\quad {\rm for}\ u\in H^{1+s}(\Omega), \quad 0\leq s\leq 1.
\end{equation*}
\end{proof}
Note that this Lemma has included results for the special case of simple eigenvalues, i.e., $l_i=1$. In following, we omit the index number for a specific eigenvalue and eigenfunction for simplicity. Let $(\lambda,u)$ and $(\lambda_{h},u_{h})$ be eigenpairs of (\ref{varForm}) and (\ref{FEMvarForm}), respectively.

Let us first define the Ritz projection $P_h: H^1_0(\Omega)\rightarrow V_h$ such that
\begin{equation}\label{RitzDef}
(\nabla P_h u, \nabla v_h)= (\nabla u,\nabla v_h)\quad \forall v_h\in V_h.
\end{equation}
\begin{lemma}\label{basicErrEst1}
Let assumptions in Lemma \ref{basicErrEst} hold with $s=1$. Then,
\begin{equation*}
\|\nabla(P_hu-u_h)\|_{L^2(\Omega)} \leq C h^{2}|u|_{H^{2}(\Omega)}.
\end{equation*}
\end{lemma}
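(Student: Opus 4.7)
The plan is to test the Galerkin orthogonality identity defining $P_h u - u_h \in V_h$ against itself and reduce the bound to quantities that Lemma \ref{basicErrEst} already controls by $O(h^2)$ when $s=1$.

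First, I would combine the Ritz projection identity (\ref{RitzDef}), the continuous weak form (\ref{varForm}) and its discrete counterpart (\ref{FEMvarForm}) to obtain, for every $v_h \in V_h$, the relation
\begin{equation*}
(\nabla(P_h u - u_h), \nabla v_h) = (\nabla u - \nabla u_h, \nabla v_h) = \lambda(u - u_h, v_h) + (\lambda - \lambda_h)(u_h, v_h).
\end{equation*}
Setting $v_h = P_h u - u_h$ and applying Cauchy--Schwarz to each piece of the right-hand side yields
\begin{equation*}
\|\nabla(P_h u - u_h)\|_{L^2(\Omega)}^2 \leq \bigl(\lambda \|u - u_h\|_{L^2(\Omega)} + |\lambda - \lambda_h|\,\|u_h\|_{L^2(\Omega)}\bigr)\,\|P_h u - u_h\|_{L^2(\Omega)}.
\end{equation*}

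Next, I would insert the three ingredients furnished by Lemma \ref{basicErrEst} with $s=1$: the $L^2$ eigenfunction estimate $\|u - u_h\|_{L^2(\Omega)} \leq C h^2 |u|_{H^2(\Omega)}$, the quadratic eigenvalue gap $|\lambda - \lambda_h| \leq C h^2 |u|_{H^2(\Omega)}$, and the normalization $\|u_h\|_{L^2(\Omega)} = 1$ from (\ref{discNorm}). Because $P_h u - u_h \in V_h$, the Poincar\'e inequality in the Dirichlet case, or the Poincar\'e--Wirtinger inequality on zero-mean functions in the Neumann case, provides $\|P_h u - u_h\|_{L^2(\Omega)} \leq C \|\nabla(P_h u - u_h)\|_{L^2(\Omega)}$. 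Dividing through by $\|\nabla(P_h u - u_h)\|_{L^2(\Omega)}$ then produces the claimed $O(h^2)$ bound.

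The only step that looks potentially worrying a priori is the term $(\lambda - \lambda_h)(u_h, v_h)$, which does not visibly factor as ``error times error''; the saving grace is precisely the quadratic-in-$h$ convergence of the eigenvalues in Lemma \ref{basicErrEst}, which makes this contribution of the same order as the genuine eigenfunction-error piece $\lambda(u - u_h, v_h)$. Beyond Lemma \ref{basicErrEst}, Cauchy--Schwarz and a Poincar\'e-type inequality, no additional machinery is required.
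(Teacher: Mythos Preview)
Your proposal is correct and follows essentially the same route as the paper: test the identity $(\nabla(P_hu-u_h),\nabla v_h)=(\lambda u-\lambda_h u_h,v_h)$ with $v_h=P_hu-u_h$, apply Cauchy--Schwarz, use Poincar\'e to absorb $\|P_hu-u_h\|_{L^2}$ into the gradient norm, and invoke Lemma~\ref{basicErrEst} with $s=1$. Your remark that in the Neumann case one uses the Poincar\'e--Wirtinger inequality on zero-mean functions is a helpful addition that the paper leaves implicit.
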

\begin{proof}
We take $v_h=P_h u-u_h$ in(\ref{varForm}), (\ref{FEMvarForm}) and (\ref{RitzDef}). Then, we have
\begin{equation*}
\begin{aligned}
(\nabla(P_hu-u_h),\nabla(P_hu-u_h))
&= (\lambda u-\lambda_hu_h,P_hu-u_h)\\
&= \big(\lambda(u-u_h)+(\lambda-\lambda_h)u_h,P_hu-u_h\big).
\end{aligned}
%\leq Ch^2\|P_hu-u_h\|_1
\end{equation*}
Then, by the Cauchy-Schwarz inequality and triangle inequality,
\begin{equation*}
\begin{aligned}
\|\nabla(P_hu-u_h)\|_{L^2(\Omega)}^2 &\leq \big(\lambda \|u-u_h\|_{L^2(\Omega)}+|\lambda-\lambda_h|\|u_h\|_{L^2(\Omega)}\big)\|P_hu-u_h\|_{L^2(\Omega)}\\
&=\big(\lambda \|u-u_h\|_{L^2(\Omega)}+|\lambda-\lambda_h|\big)\|P_hu-u_h\|_{L^2(\Omega)}\\
&\leq C\big(\lambda \|u-u_h\|_{L^2(\Omega)}+|\lambda-\lambda_h|\big)\|\nabla(P_hu-u_h)\|_{L^2(\Omega)},
\end{aligned}
\end{equation*}
where the Poincar\'{e} inequality is used in the last inequality.
Therefore,
\begin{equation}\label{nablaPhUh}
\begin{aligned}
\|\nabla(P_hu-u_h)\|_{L^2(\Omega)} &\leq C\big(\lambda \|u-u_h\|_{L^2(\Omega)}+|\lambda-\lambda_h|\big)\\
%&=\mathcal{O}(h^{s+1}) + \mathcal{O}(h^{2s})=\mathcal{O}(h^{s+1})
 &\leq C\big(\lambda C h^{2} + Ch^{2}\big)|u|_{H^{2}(\Omega)}\\
 &\leq C h^{2}|u|_{H^{2}(\Omega)}
\end{aligned}
\end{equation}
using Lemma \ref{basicErrEst}.
\end{proof}

\begin{lemma}\label{Lemma24}
Assume that $u\in W^{2,4}(\Omega)$. Then,
\begin{equation}
\|\nabla u-\nabla u_h\|_{L^4(\Omega)} \leq C h|u|_{W^{2,4}(\Omega)}.
\end{equation}
\end{lemma}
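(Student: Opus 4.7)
The plan is to split the $L^4$ gradient error via the Ritz projection $P_h u$ from \eqref{RitzDef}:
\begin{equation*}
\|\nabla u - \nabla u_h\|_{L^4(\Omega)} \leq \|\nabla(u - P_h u)\|_{L^4(\Omega)} + \|\nabla(P_h u - u_h)\|_{L^4(\Omega)},
\end{equation*}
and to handle the two pieces by rather different means. The point is that $P_h u - u_h$ lies in $V_h$ and already comes with the higher-order $L^2$ bound of Lemma~\ref{basicErrEst1}, so we can afford to pay the cost of a global inverse inequality on this piece.

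For the discrete term, $\nabla(P_h u - u_h)$ is piecewise polynomial on the quasi-uniform mesh, so the standard inverse inequality gives
\begin{equation*}
\|\nabla(P_h u - u_h)\|_{L^4(\Omega)} \leq C h^{-d/4} \|\nabla(P_h u - u_h)\|_{L^2(\Omega)}.
\end{equation*}
Inserting Lemma~\ref{basicErrEst1} and the continuous embedding $W^{2,4}(\Omega) \hookrightarrow H^2(\Omega)$ on the bounded polygonal/polyhedral domain then yields
\begin{equation*}
\|\nabla(P_h u - u_h)\|_{L^4(\Omega)} \leq C h^{2-d/4} |u|_{H^2(\Omega)} \leq C h\, |u|_{W^{2,4}(\Omega)},
\end{equation*}
since $2-d/4 \geq 5/4 > 1$ for $d \leq 3$.

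For the projection term, the plan is to invoke the $W^{1,4}$-stability of the Ritz projection on quasi-uniform regular triangulations, $\|\nabla P_h v\|_{L^4(\Omega)} \leq C\|\nabla v\|_{L^4(\Omega)}$, which is a Rannacher--Scott / Crouzeix--Thom\'ee type estimate for the Galerkin projection. Writing $u - P_h u = (u - I_h u) - P_h(u - I_h u)$ for the nodal (or Scott--Zhang) interpolant $I_h$ and combining stability with the classical $L^4$ interpolation bound $\|\nabla(u - I_h u)\|_{L^4(\Omega)} \leq C h|u|_{W^{2,4}(\Omega)}$ gives
\begin{equation*}
\|\nabla(u - P_h u)\|_{L^4(\Omega)} \leq C \|\nabla(u - I_h u)\|_{L^4(\Omega)} \leq C h\, |u|_{W^{2,4}(\Omega)}.
\end{equation*}
Summing the two contributions delivers the claim. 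The main obstacle is the $W^{1,4}$-stability of $P_h$: on convex polygons/polyhedra with quasi-uniform meshes it is a standard (but nontrivial) result, whereas for nonconvex polygons with reentrant corners $W^{1,p}$-stability for $p>2$ can fail in general and would require sharpening the mesh hypothesis (e.g.\ grading near the corners) to be grafted onto the regularity assumptions preceding this lemma.
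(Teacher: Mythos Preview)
Your proof is correct and follows essentially the same route as the paper: the same triangle-inequality split through the Ritz projection $P_h u$, the same inverse-inequality argument combined with Lemma~\ref{basicErrEst1} for $\|\nabla(P_hu-u_h)\|_{L^4(\Omega)}$, and the same $W^{1,4}$ quasi-optimality of the Ritz projection for $\|\nabla(u-P_hu)\|_{L^4(\Omega)}$. The only cosmetic difference is that the paper directly cites the quasi-optimality estimate $\|\nabla(u-P_hu)\|_{L^4(\Omega)}\le C\inf_{v\in V_h}\|\nabla(u-v)\|_{L^4(\Omega)}$ from Brenner--Scott (their (8.5.4)), whereas you recover it from $W^{1,4}$-stability plus Galerkin orthogonality with an interpolant; these are two sides of the same coin. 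Your caveat about $W^{1,p}$-stability on nonconvex polygons applies equally to the paper's cited result.
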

\begin{proof}
By triangle inequality, we have
\begin{equation}\label{trianIn}
\|\nabla u- \nabla u_{h}\|_{L^{4}(\Omega)}\leq \|\nabla u- \nabla P_h u\|_{L^{4}(\Omega)}+\|\nabla P_h u- \nabla u_{h}\|_{L^{4}(\Omega)}.
\end{equation}
By (8.5.4) on pp. 230 \cite{Brenner} and the approximation property (4.4.28) on pp. 110 \cite{Brenner},
\begin{equation}\label{trianIn1}
\begin{aligned}
\|\nabla u- \nabla P_h u\|_{L^{4}(\Omega)}&\leq C\inf_{v\in V_h}\|\nabla u- \nabla v\|_{L^{4}(\Omega)}\\
&\leq Ch|u|_{W^{2,4}(\Omega)},
\end{aligned}
\end{equation}
By inverse inequality and Lemma \ref{basicErrEst1}, we have
\begin{equation}\label{trianIn2}
\begin{aligned}
\|\nabla P_h u- \nabla u_{h}\|_{L^{4}(\Omega)}&\leq Ch^{-\frac{d}{4}}\|\nabla P_h u- \nabla u_{h}\|_{L^{2}(\Omega)}\\
&\leq Ch^{2-\frac{d}{4}}|u|_{H^2(\Omega)}.
\end{aligned}
\end{equation}
A combination of (\ref{trianIn}), (\ref{trianIn1}) and (\ref{trianIn2}) allows the conclusion to hold.
\end{proof}

\subsection{Shape sensitivity analysis}
As a tool in shape optimization, shape calculus/shape sensitivity analysis can be performed by the \emph{velocity (speed) method} \cite{Defour,Sok1} and the \emph{perturbation of identity method} \cite{Pironneau}. The two approaches are equivalent in the sense of first-order expansion with respect to domain perturbations.
We recall basic shape calculus using the speed method (Sec. 2.9, pp. 54 and pp. 98 of \cite{Sok1}).

For a variable $t\in[0,\tau)$ with $\tau>0$, we introduce a velocity field $\mathcal{V}(t,x)\in C([0,\tau];$ $\mathcal{D}^1(\mathbb{R}^d,\mathbb{R}^d))$ with $\mathcal{D}^1(\mathbb{R}^d,\mathbb{R}^d)$ being the space of continuously differentiable transformations of $\mathbb{R}^d$. Then, we define a family of  transformations $T_t:\Omega\rightarrow \Omega_t$ with $\Omega_t=T_t(\mathcal{V})(\Omega)$. For $x=x(t,X)\in \Omega_t$ with $X\in\Omega$, it satisfies the following flow system
\begin{equation}
\frac{{\rm d}x}{{\rm d}t}(t,X) = \mathcal{V}(t,x(t,X)),\ \ x(0,X) = X.
\end{equation}
For some domain $\Omega$, a \emph{shape functional} depending on the shape is denoted by $J(\Omega)$ with $J(\cdot):\Omega\mapsto\mathbb{R}$. Denote $\mathcal{V}=\mathcal{V}(0,X)$ in the following for simplicity.
\begin{definition}
The \emph{Eulerian derivative} of $J(\Omega)$ at $\Omega$ in the direction $\mathcal{V}$ is defined by
\begin{equation}
{\rm d}J(\Omega;\mathcal{V}):=\lim_{t\searrow 0} \frac{J(\Omega_t)-J(\Omega)}{t}
\end{equation}
if the limit exists \cite{Defour}.
\end{definition}
\begin{definition}
The shape functional $J(\Omega)$ is called \emph{shape differentiable} at $\Omega$ if
(i) there exist Eulerian derivatives for all directions $\mathcal{V}$;\\
(ii) the map $\mathcal{V}\rightarrow {\rm d}J(\Omega;\mathcal{V})$ is linear and continuous from $C([0,\tau];\mathcal{D}^1(\mathbb{R}^d,\mathbb{R}^d))$ to $\mathbb{R}$.
\end{definition}

We remark that non-differentiable cases occur when, e.g., the Eulerian derivative exists but the mapping $\mathcal{V}\mapsto {\rm d}J(\Omega;\mathcal{V})$ is nonlinear. Such cases occur for the shape functionals of multiple eigenvalues.
\begin{definition}
The material derivative in some Sobolev space ${W}(\Omega)$ of a state variable $u=u(\Omega)\in {W}(\Omega)$ in a direction $\mathcal{V}$ is denoted as
\begin{equation}
\dot{u}(\Omega;\mathcal{V}):=\lim_{t\searrow 0} \frac{u(\Omega_t)\circ T_t(\mathcal{V})(\Omega)-u(\Omega)}{t}
\end{equation}
if the limit exists.
\end{definition}

Material derivatives on the boundary $\partial\Omega$ can be defined analogously.
When taking into account the strong (or weak) convergence in ${W}(\Omega)$ for the limit, the material derivative will be more specified as a strong (or weak) version.
%
%\begin{lemma}\label{SurfaceDer}
%Let $\Omega$ be of class $C^2$. Assume that $\psi\in C^1([0,\tau];H^2_{{\rm loc}}(\mathbb{R}^d))$ and
%$V\in  C([0,\tau];C_{\rm loc}(\mathbb{R}^d,\mathbb{R}^d))$, then we have for the boundary type functional  $J(\Omega):= \int_{\partial\Omega }\psi dx$
%\begin{equation}
%dJ(\Omega;V)=\int_{\partial\Omega} \left[\psi'(\Omega;V)  + \left(\frac{\partial\psi}{\partial n} + \kappa\psi\right)V(0)\cdot n \right]ds,
%\end{equation}
%\end{lemma}
%where $\kappa$ denotes the mean curvature.

The structure theorem [10, Corollary 1, pp. 480] states that the boundary Eulerian derivative of shape functional depends only on the normal part of the velocity on the boundary when certain smoothness of the boundary is satisfied. The volume formulation of Eulerian derivative actually holds with less smoothness requirement on boundary \cite{Laurain} and offers more accuracy \cite{Hiptmair}.
%\begin{theorem}[[10, Corollary 1, pp. 480] \label{structure}
%Assume that $\partial\Omega$ is $C^{\kappa+1}$, there exists a scalar distribution $g(\partial\Omega)$ in $\mathbb{R}^d$ with support in $\partial\Omega$ such that $g(\partial\Omega)\in C^\kappa(\partial\Omega)'$ (the dual of $C^\kappa(\partial\Omega)$) and $\forall \ \mathcal{V}\in \mathcal{D}^\kappa(\mathbb{R}^d;\mathbb{R}^d)$
%\begin{equation}
%{\rm d}J(\Omega;\mathcal{V})=\langle g(\partial\Omega),\gamma_{\partial\Omega}(\mathcal{V})\cdot n\rangle_{C^r(\partial\Omega)}.
%\end{equation}
%If $g(\partial\Omega)\in L^1(\partial\Omega)$, then
%\begin{equation}\label{shapeGrad}
%{\rm d}J(\Omega;\mathcal{V})=\int_{\partial\Omega}g\mathcal{V}(0)\cdot n {\rm d}s,
%\end{equation}
%where $\gamma_{\partial\Omega}$ is the trace operator on $\partial\Omega$ and $g$ is called the \emph{shape gradient}.
%\end{theorem}
By the \emph{speed method}, we can obtain the Eulerian derivatives of simple as well as multiple eigenvalues for both Dirichlet and Neumann boundary conditions. Denote $\mathcal{V}_n=\mathcal{V}(0)|_{\partial\Omega}\cdot n$. The Eulerian derivative of an eigenvalue $\lambda=\lambda(\Omega)$ (depending on $\Omega$) in the direction $\mathcal{V}$ is defined to be
\begin{equation}
{\rm d}\lambda(\Omega;\mathcal{V}):=\lim_{t\searrow 0} \frac{\lambda(\Omega_t)-\lambda(\Omega)}{t}.
\end{equation}
For a simple eigenvalue, let $(\lambda,u)$ be an eigenpair of the problem (\ref{varForm}). Then, $\lambda(\Omega)$ is shape differentiable and
\begin{equation}\label{ThmDerEq}
{\rm d}\lambda(\Omega;\mathcal{V}) = \int_\Omega \big[ - 2\nabla u\cdot {\rm D}\mathcal{V}\nabla u + {\rm div} \mathcal{V}(|\nabla u|^2-\lambda u^2)\big]{\rm d}x,
\end{equation}
where ${\rm D}\mathcal{V}$ denotes the Jacobian of $\mathcal{V}$.
If, furthermore, $\Omega$ is convex or if it is of class $C^2$, then the boundary Eulerian derivative of Dirichlet eigenvalue
\begin{equation}\label{ShapeDer}
{\rm d}\lambda(\Omega;\mathcal{V})=-\int_{\partial\Omega} \left(\frac{\partial u}{\partial n}\right)^2 \mathcal{V}_n{\rm d}s.
\end{equation}
If $\Omega$ is of class $C^3$ for the Neumann case, then the boundary Eulerian derivative
\begin{equation}\label{NeumannBdy}
{\rm d}\lambda(\Omega;\mathcal{V})=\int_{\partial\Omega} \left(|\nabla_{\Gamma} u|^2-\lambda u^2\right) \mathcal{V}_n{\rm d}s,
\end{equation}
where the \emph{tangential gradient}
\begin{equation*}
\nabla_\Gamma u := \nabla u - \frac{\partial u}{\partial n}n.
\end{equation*}
(See in Appendix the formal derivations of (\ref{ThmDerEq})-(\ref{NeumannBdy}) for self-containedness of the paper).

For $u$ sufficiently regular, the expression (\ref{ThmDerEq}) corresponds to those appearing concise in \cite{HenriBook}:
\begin{equation}\label{divExp}
\begin{aligned}
{\rm d}\lambda(\Omega;\mathcal{V}) =& -\int_\Omega {\rm div}\big( |\nabla u|^2 \mathcal{V}\big){\rm d}x \quad {\rm for \quad Dirichlet}\\
&\bigg({\rm or}\ \int_\Omega {\rm div}\big( (|\nabla u|^2-\lambda u^2) \mathcal{V}\big){\rm d}x \quad{\rm for\ \ Neumann}\bigg).
\end{aligned}
\end{equation}
However, it is not appropriate to use (\ref{divExp}) for discretization. The usual $C^0$ Lagrange finite element discretization of (\ref{divExp}) fails to hold since $\nabla u_h$ is not continuously differentiable. We thus consider (\ref{ThmDerEq}) for discretization.
%
%\begin{remark}
%In the proof above, we have derived the boundary type Eulerian derivatives (\ref{ShapeDer}) and (\ref{NeumannBdy}) based on results (\ref{ThmDerEq}) and (\ref{NeumannVol}) of the volume type. Alternative proof can be made for the boundary type Eulerian derivatives from introducing the shape derivative rather than the material derivative.
%\end{remark}

For the multiple eigenvalue case, we simplify $\lambda_{k_i}$ as $\lambda$ and let $u_i$ $(i=1,2,\cdots,l)$ be its eigenfunctions satisfying (\ref{contNorm}). Then, $\lambda$ is no longer shape differentiable. Two strategies can be considered: the sub-differential and directional derivatives \cite{HenriBook,Rousselet83,Sok1}. We adopt the latter to follow closely the derivations of directional derivatives for the Dirichlet case as in Theorem 2.5.8 \cite{HenriBook} or \cite{Rousselet83}. Then we can have the following results for both boundary conditions:
\begin{proposition}\label{MultExp}
Assume that $\lambda=\lambda(\Omega)$ is a multiple eigenvalue of order $l\geq 2$ for (\ref{EigModel}) with Dirichlet boundary condition. Let $u_{1}, u_{2}\cdots,u_{l}$ be an $L^2$-orthonormal basis of the eigenspace associated with $\lambda$, then the Eulerian derivative is one of the eigenvalues of the matrix $\mathcal{M}\in \mathbb{R}^{l\times l}$ with the entry
\begin{equation}\label{MultiThmDerEq}
m_{i,j}  = \int_\Omega \big[ - ({\rm D}\mathcal{V}+{\rm D}\mathcal{V}^T)\nabla u_{i}\cdot \nabla u_{j} + {\rm div} \mathcal{V}(\nabla u_{i}\cdot \nabla u_{j}-\lambda  u_{i} u_{j})\big]{\rm d}x.
\end{equation}
If, furthermore, $\Omega$ is convex or if it is of class $C^2$, then
\begin{equation}\label{MultiShapeDer}
m_{i,j} =-\int_{\partial\Omega} \frac{\partial u_{i}}{\partial n}\frac{\partial u_{j}}{\partial n} \mathcal{V}_n{\rm d}s.
\end{equation}
In the case of Neumann boundary condition,
\begin{equation}\label{MultiNeumannVol}
m_{i,j}  = \int_\Omega \big[- ({\rm D}\mathcal{V}+{\rm D}\mathcal{V}^T)\nabla u_{i}\cdot \nabla u_{j} + {\rm div} \mathcal{V}(\nabla u_{i}\cdot \nabla u_{j}-\lambda  u_{i} u_{j})\big]{\rm d}x.
\end{equation}
If $\Omega$ is of class $C^3$, then
\begin{equation}\label{MultiNeumannBdy}
m_{i,j} =\int_{\partial\Omega} \left(\nabla_{\Gamma} u_{i}\cdot\nabla_{\Gamma} u_{j}-\lambda u_{i}u_{j}\right)\mathcal{V}_n{\rm d}s.
\end{equation}
with $i,j=1,\cdots,l$.
\end{proposition}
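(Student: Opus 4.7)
The plan is to apply the speed method, paralleling the Dirichlet argument of Theorem 2.5.8 in \cite{HenriBook} (see also \cite{Rousselet83}) and extending it to cover both boundary conditions and both volume and boundary expressions. First, I would pull back the perturbed eigenvalue problem on $\Omega_t := T_t(\Omega)$ to the reference domain $\Omega$ via the change of variables $u \mapsto u^t := u(\Omega_t)\circ T_t$. With $\gamma(t) := \det({\rm D}T_t)$ and $A(t) := \gamma(t)({\rm D}T_t)^{-1}({\rm D}T_t)^{-T}$, the transported weak formulation becomes
\begin{equation*}
\int_\Omega A(t)\nabla u^t\cdot\nabla v\,{\rm d}x = \lambda(t)\int_\Omega \gamma(t)u^t v\,{\rm d}x \quad \forall v\in V,
\end{equation*}
where in the Neumann case the shifting mean-zero constraint is rewritten against the transported measure $\gamma(t)\,{\rm d}x$ and decoupled by a Lagrange multiplier so that $V$ stays fixed.

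Next I would invoke spectral perturbation theory for analytic families of self-adjoint compact operators. Since $t\mapsto (A(t),\gamma(t))$ is smooth into $L^\infty(\Omega)$ for small $t$, and the forms remain symmetric and coercive, the Lyusternik--Kato--Rellich theorem guarantees that the $l$ eigenvalues $\lambda_1(t),\ldots,\lambda_l(t)$ splitting from $\lambda$ can be labelled so as to be differentiable at $t=0$; moreover their derivatives at $t=0$ are precisely the eigenvalues of the symmetric $l\times l$ matrix $\mathcal{M}=(m_{ij})$ given by
\begin{equation*}
m_{ij} := \frac{{\rm d}}{{\rm d}t}\bigg|_{t=0}\left[\int_\Omega A(t)\nabla u_i\cdot\nabla u_j\,{\rm d}x - \lambda\int_\Omega \gamma(t)u_i u_j\,{\rm d}x\right],
\end{equation*}
evaluated against the fixed $L^2$-orthonormal eigenbasis $\{u_i\}_{i=1}^l$. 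This is the step that produces \emph{directional} (rather than G\^ateaux) derivatives: in each direction $\mathcal{V}$ the Eulerian derivative exists and is one of the $l$ eigenvalues of $\mathcal{M}$, which explains the lack of linearity in $\mathcal{V}$.

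It then remains to compute $m_{ij}$ explicitly and translate it into the four announced forms. Using $\gamma'(0) = {\rm div}\,\mathcal{V}$ and $A'(0) = ({\rm div}\,\mathcal{V})I - {\rm D}\mathcal{V} - {\rm D}\mathcal{V}^T$ (obtained from Jacobi's formula and differentiation of $B(t)^{-1}B(t)^{-T}$ at $B(0)=I$), I would directly expand the time derivative to obtain
\begin{equation*}
m_{ij} = \int_\Omega\left[-({\rm D}\mathcal{V}+{\rm D}\mathcal{V}^T)\nabla u_i\cdot\nabla u_j + {\rm div}\,\mathcal{V}(\nabla u_i\cdot\nabla u_j - \lambda u_i u_j)\right]{\rm d}x,
\end{equation*}
which is exactly (\ref{MultiThmDerEq}) for Dirichlet and (\ref{MultiNeumannVol}) for Neumann. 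For the boundary forms I would assume the regularity stated ($\Omega$ convex or of class $C^2$ for Dirichlet, $C^3$ for Neumann), so that $u_i\in H^2(\Omega)$, integrate by parts using $-\Delta u_i = \lambda u_i$, and invoke the tangential--normal splitting $\nabla u_i = \nabla_\Gamma u_i + (\partial u_i/\partial n)n$. In the Dirichlet case, $u_i|_{\partial\Omega}=0$ forces $\nabla_\Gamma u_i = 0$, so interior contributions cancel and only $-\int_{\partial\Omega}(\partial u_i/\partial n)(\partial u_j/\partial n)\mathcal{V}_n\,{\rm d}s$ survives; in the Neumann case, $(\partial u_i/\partial n)|_{\partial\Omega}=0$ gives $\nabla u_i = \nabla_\Gamma u_i$ on $\partial\Omega$, producing (\ref{MultiNeumannBdy}).

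The principal obstacle is the spectral perturbation step, i.e.\ establishing that the $l$ branches $\lambda_i(t)$ admit well-defined derivatives at $t=0$ and that these derivatives coincide with the spectrum of $\mathcal{M}$. This calls for the analytic-perturbation framework (implicit function theorem applied to the characteristic polynomial of the matrix representation of the resolvent projected onto the perturbed $l$-dimensional spectral subspace, or equivalently a Courant--Fischer argument), implemented with care for the $t$-dependent Neumann mean-zero constraint. Once this is granted, the derivation of $m_{ij}$ and the integration by parts leading to the four formulas are routine.
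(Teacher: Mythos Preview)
Your proposal is correct and follows essentially the same route the paper indicates: the paper does not give a self-contained proof of Proposition~\ref{MultExp} but simply points to Theorem~2.5.8 of \cite{HenriBook} and \cite{Rousselet83} for the Dirichlet directional-derivative framework, with the appendix supplying the pullback computation $A'(0)={\rm div}\,\mathcal{V}\,I-{\rm D}\mathcal{V}-{\rm D}\mathcal{V}^T$, $\gamma'(0)={\rm div}\,\mathcal{V}$ and the integration-by-parts passage to boundary form only in the simple case. Your outline---pullback to $\Omega$, analytic perturbation of the transported self-adjoint family to identify the directional derivatives as eigenvalues of $\mathcal{M}$, then the explicit computation of $m_{ij}$ and the integration by parts under the stated regularity---is exactly the program those references carry out, and your flagging of the spectral-perturbation step (and the $t$-dependent Neumann constraint) as the only genuinely nontrivial point is accurate.
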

%\begin{proof}
%It is standard to use the strategy of directional derivatives for treating with the multiple eigenvalue case of Dirichlet Laplacian in shape sensitivity analysis \cite{HenriBook,Rousselet83}. The results for the case of Neumann boundary condition can follow similar clue as the those for the Dirichlet case.
%\end{proof}
%
%\begin{remark}
%In the special case $l=2$, we have explicit Eulerian derivatives
%\begin{equation*}
%\frac{1}{2}\Big((m_{11}+m_{22}) \pm \sqrt{(m_{11}-m_{22})^2+4m_{12}^2}\Big)
%\end{equation*}
%by computing easily the two real roots of the quadratic characteristic equation for eigenvalues of $\mathcal{M}$. For general $l>2$, we resort to numerical methods \cite{Demmel} for computing eigenvalues of a matrix.
%\end{remark}
%\begin{remark}
%When ${\rm div} \mathcal{V}$ keeps a constant in $\Omega$, e.g., for volume-preserving flows with a sufficient condition that $ {\rm div} \mathcal{V} = 0$ in $\Omega$ holds, each of the above volume integral expressions (\ref{ThmDerEq}), (\ref{NeumannVol}), (\ref{MultiThmDerEq}) and (\ref{MultiNeumannVol}) reduces to the first term.
%\end{remark}

\section{\emph{A priori} error estimates of approximate shape gradients in Eulerian derivatives}
With the Galerkin finite element method for discretizations of the Laplace eigenvalue problem, we compute the approximate Eulerian derivatives and resulting shape gradients. We will analyze the convergence rates with {\emph a priori} error estimates in an infinite-dimensional operator norm. For simplicity, we will only discuss the Dirichlet case. The results below however can be similarly extended to the Neumann case. We will consider both cases of simple and multiple eigenvalues. We first discuss the case of simple eigenvalues. In last section, ${\rm d}\lambda(\Omega;\mathcal{V})$ is now simplified as $\lambda^\prime(\Omega;\mathcal{V})$. In order to differentiate notations for the boundary and volume type Eulearian derivatives, we denote (\ref{ThmDerEq}) and (\ref{ShapeDer}) by $\lambda^\prime(\Omega;\mathcal{V})_\Omega$ and $\lambda^\prime(\Omega;\mathcal{V})_{\partial\Omega}$, respectively. The finite element approximations of (\ref{ThmDerEq}) and (\ref{ShapeDer}) then read respectively:
\begin{equation}\label{discVV}
\lambda^\prime(\Omega;\mathcal{V})_{\Omega,h} := \int_\Omega \Big[ - 2 \nabla u_h\cdot {\rm D}\mathcal{V}\nabla u_h +  {\rm div} \mathcal{V}(|\nabla u_h|^2-\lambda_h u_h^2)\Big]{\rm d}x
\end{equation}
and
\begin{equation}\label{discBB}
\lambda^\prime(\Omega;\mathcal{V})_{\partial\Omega,h} := -\int_{\partial\Omega} \left(\frac{\partial u_h}{\partial n}\right)^2 \mathcal{V}_n{\rm d}s.
\end{equation}
In the continuous setting, $\lambda^\prime(\Omega;\mathcal{V})_{\Omega}=\lambda^\prime(\Omega;\mathcal{V})_{\partial\Omega}$ if $\partial\Omega$ is $C^2$. With $(\lambda,u)$ being discretized by finite elements, we have $\lambda^\prime(\Omega;\mathcal{V})_{\Omega,h}\neq\lambda^\prime(\Omega;\mathcal{V})_{\partial\Omega,h}$.

For the case of multiple eigenvalues, denote the matrices (resp. eigenvalues) $\mathcal{M}_{\Omega}$ (resp. $\{\sigma_{\Omega,i}\}_{i=1}^l$) and $\mathcal{M}_{\partial\Omega}$ (resp. $\{\sigma_{\partial\Omega,i}\}_{i=1}^l$) corresponding to (\ref{MultiThmDerEq}) and (\ref{MultiShapeDer}), respectively. The approximations of (\ref{MultiThmDerEq}) and (\ref{MultiShapeDer}) are
\begin{equation}\label{FEM_MultiThmDerEq}
m^h_{i,j}(\Omega;\mathcal{V})_\Omega  := \int_\Omega \Big[ - ({\rm D}\mathcal{V}+{\rm D}\mathcal{V}^T)\nabla u_{i,h}\cdot \nabla u_{j,h} + {\rm div} \mathcal{V}(\nabla u_{i,h}\cdot \nabla u_{j,h}-\lambda  u_{i,h} u_{j,h})\Big]{\rm d}x
\end{equation}
and
\begin{equation}\label{FEM_MultiShapeDer}
m^h_{i,j}(\Omega;\mathcal{V})_{\partial\Omega} :=-\int_{\partial\Omega} \frac{\partial u_{i,h}}{\partial n}\frac{\partial u_{j,h}}{\partial n} \mathcal{V}_n{\rm d}s
\end{equation}
with $i,j=1,2,\cdots,l$. The corresponding matrices (resp. eigenvalues) are denoted by $\mathcal{M}^h_{\Omega}$ (resp. $\{\sigma_{\Omega,i}^h\}_{i=1}^l$) and $\mathcal{M}^h_{\partial\Omega}$ (resp. $\{\sigma_{\partial\Omega,i}^h\}_{i=1}^l$), respectively.

%\subsection{\emph{A priori} error estimates of approximate shape gradients}
For each simple/multiple eigenvalue case, \emph{a priori} error estimates are presented for two type (volume and boundary) finite element approximations of Eulerian derivatives and corresponding shape gradients. We first consider the case of simple eigenvalues and then the multiple case.
%We first present convergence analysis of the volume integral expression. %In the following, the constant $C$ appearing at different circumstances may be different, but they are independent of mesh size $h$.

\begin{remark}
In most cases for shape gradient algorithms, the domain is polyhedral and thus no geometric errors are introduced after triangulations. We will not consider the geometric approximation errors when performing convergence analysis of approximate Eulerian derivatives. When the domain is smooth, e.g., $C^2$, for boundary formulas of Eulerian derivatives to hold, we assume that the geometric errors can be negligible by using isoparametric finite elements or fine meshes on boundaries.
\end{remark}

\subsection{Simple eigenvalue case}
For the continuous formulas (\ref{ThmDerEq})-(\ref{ShapeDer}), we present convergence analysis of the approximate Eulerian derivatives with the volume integral (\ref{discVV}) and boundary integral (\ref{discBB}), respectively. For the volume type, we have
\begin{theorem}\label{Thm:domain}
Let assumptions in Lemma \ref{basicErrEst} hold. Let $(\lambda,u)$ be a single eigenpair of (\ref{varForm}) and $(\lambda_h,u_h)$ be its Galerkin Lagrange finite element approximation in (\ref{FEMvarForm}). Then,
\begin{equation}\label{DomainErrEst}
|\lambda^\prime(\Omega;\mathcal{V})_\Omega - \lambda^\prime(\Omega;\mathcal{V})_{\Omega,h}| \leq Ch^{2s}|u|_{H^{1+s}(\Omega)}|\mathcal{V}|_{W^{1,\infty}(\Omega)}, \quad 0<s\leq 1.
\end{equation}
If $\mathcal{V}\in H^2(\Omega)^d$ and $u\in W^{2,4}(\Omega)$, we further have
\begin{equation}\label{DomainErrEst1}
|\lambda^\prime(\Omega;\mathcal{V})_\Omega - \lambda^\prime(\Omega;\mathcal{V})_{\Omega,h}| \leq Ch^{2}|u|_{W^{2,4}(\Omega)}|\mathcal{V}|_{H^{2}(\Omega)}.
\end{equation}
\end{theorem}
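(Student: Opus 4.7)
The plan is to decompose the discretization error by setting $e:=u-u_h$ and using the algebraic identities
\[\nabla u\cdot D\mathcal{V}\nabla u-\nabla u_h\cdot D\mathcal{V}\nabla u_h=\nabla u\cdot(D\mathcal{V}+D\mathcal{V}^T)\nabla e-\nabla e\cdot D\mathcal{V}\nabla e,\]
\[|\nabla u|^2-|\nabla u_h|^2=2\nabla u\cdot\nabla e-|\nabla e|^2,\qquad \lambda u^2-\lambda_h u_h^2=\lambda(u+u_h)(u-u_h)+(\lambda-\lambda_h)u_h^2.\]
Substituting these into $\lambda^\prime(\Omega;\mathcal{V})_\Omega-\lambda^\prime(\Omega;\mathcal{V})_{\Omega,h}$ splits the difference into three classes of contributions: (i) cross terms linear in $\nabla e$, namely $\int_\Omega w\cdot\nabla e\,dx$ with $w=2\bigl[{\rm div}\,\mathcal{V}\,I-D\mathcal{V}-D\mathcal{V}^T\bigr]\nabla u$; (ii) quadratic error terms $\int_\Omega\bigl(2\nabla e\cdot D\mathcal{V}\nabla e-{\rm div}\,\mathcal{V}|\nabla e|^2\bigr)\,dx$; and (iii) $L^2$-type pieces $-\int_\Omega{\rm div}\,\mathcal{V}\bigl[\lambda(u+u_h)(u-u_h)+(\lambda-\lambda_h)u_h^2\bigr]\,dx$.

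Class (ii) is immediately bounded by $C\|\mathcal{V}\|_{W^{1,\infty}}\|\nabla e\|_{L^2}^2$, which Lemma \ref{basicErrEst} controls by $Ch^{2s}|u|_{H^{1+s}}^2$. Class (iii) is handled with $\|u-u_h\|_{L^2}\leq Ch^{1+s}|u|_{H^{1+s}}$ and $|\lambda-\lambda_h|\leq Ch^{2s}|u|_{H^{1+s}}$, together with the normalization $\|u_h\|_{L^2}=1$, giving $O(h^{2s})$. Class (i) is the decisive step, since Cauchy--Schwarz alone yields only the suboptimal rate $h^s$; to recover the full $h^{2s}$ I would employ the duality pairing
\[\Bigl|\int_\Omega w\cdot\nabla e\,dx\Bigr|\leq\|\nabla e\|_{H^{-s}(\Omega)}\|w\|_{H^s(\Omega)},\]
invoke the third estimate of Lemma \ref{basicErrEst} to obtain $\|\nabla e\|_{H^{-s}(\Omega)}\leq Ch^{2s}|u|_{H^{1+s}}$, and close with a Sobolev multiplier bound $\|w\|_{H^s(\Omega)}\leq C|\mathcal{V}|_{W^{1,\infty}}|u|_{H^{1+s}}$, valid because $D\mathcal{V}\in W^{1,\infty}\hookrightarrow W^{s,\infty}$ for $0<s\leq 1$ while $\nabla u\in H^s(\Omega)$. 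Adding the three classes produces (\ref{DomainErrEst}).

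For the sharper bound (\ref{DomainErrEst1}) the upgraded regularity $\mathcal{V}\in H^2(\Omega)^d$, $u\in W^{2,4}(\Omega)$ permits a direct integration by parts on the class (i) cross terms. Since $e|_{\partial\Omega}=0$,
\[\int_\Omega w\cdot\nabla e\,dx=-\int_\Omega({\rm div}\,w)\,e\,dx,\]
and ${\rm div}\,w$ consists of pointwise products of $\Delta\mathcal{V}$ with $\nabla u$ and of $D\mathcal{V}$ with $D^2u$: the former is in $L^2$ via $\Delta\mathcal{V}\in L^2$ and $\nabla u\in L^\infty$ (through $W^{2,4}\hookrightarrow W^{1,\infty}$ in $d\leq 3$), while the latter is in $L^2$ by an $L^4\times L^4$ Hölder split using $H^2\hookrightarrow W^{1,4}$, so that $\|{\rm div}\,w\|_{L^2(\Omega)}\leq C|\mathcal{V}|_{H^2}|u|_{W^{2,4}}$. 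Combining with $\|e\|_{L^2}\leq Ch^2|u|_{H^2}$ from Lemma \ref{basicErrEst} delivers $O(h^2)$ for class (i); classes (ii) and (iii) are treated in parallel by an $L^4\times L^4\times L^2$ Hölder split using Lemma \ref{Lemma24} (which supplies $\|\nabla e\|_{L^4}\leq Ch|u|_{W^{2,4}}$) and by the $s=1$ cases of Lemma \ref{basicErrEst}. The main technical delicacy I anticipate lies at the endpoint $s=1$ in the duality argument for (\ref{DomainErrEst}), because the $H^{-s}$ norm in Lemma \ref{basicErrEst} is realised as the dual of $H_0^1(\Omega)^d$ while the multiplier $w$ need not vanish on $\partial\Omega$; this is effectively bypassed at $s=1$ by the integration-by-parts route used for (\ref{DomainErrEst1}) and causes no loss in the statement.
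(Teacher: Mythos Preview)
Your proposal is correct and follows essentially the same route as the paper: the same algebraic decomposition into cross terms, quadratic-in-$\nabla e$ terms, and $L^2$-type terms, with the cross terms handled by the $H^{-s}$--$H^s$ duality from Lemma~\ref{basicErrEst} for \eqref{DomainErrEst} and by (an equivalent of) the $H^{-1}$--$H^1$ pairing together with the $L^4{\times}L^4{\times}L^2$ H\"older split and Lemma~\ref{Lemma24} for \eqref{DomainErrEst1}. The only cosmetic difference is that for \eqref{DomainErrEst1} you integrate by parts explicitly on the cross term (using $e|_{\partial\Omega}=0$) instead of invoking the $H^{-1}$ norm, which is exactly how that norm is realised in the proof of Lemma~\ref{basicErrEst}; your flagging of the endpoint duality issue is apt and the paper handles it implicitly in the same way.
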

\begin{proof}
%The proof is mainly based on the duality techniques for showing goal oriented error estimates.
First, we have by (\ref{ThmDerEq}), (\ref{discVV}) and the triangle inequality
\begin{equation}\label{threeTerms}
\begin{aligned}
&|\lambda^\prime(\Omega;\mathcal{V}) - \lambda^\prime(\Omega;\mathcal{V})_{\Omega,h}| \leq \bigg|\int_\Omega \big(2 \nabla u\cdot {\rm D}\mathcal{V}\nabla u- 2 \nabla u_h\cdot {\rm D}\mathcal{V}\nabla u_h\big) {\rm d}x\bigg| \\
&\quad \quad  \quad \quad+ \bigg|\int_\Omega {\rm div}  \mathcal{V}\big(|\nabla u|^2-|\nabla u_h|^2\big){\rm d}x\bigg|+ \bigg|\int_\Omega {\rm div}  \mathcal{V}\big(\lambda^2u^2-\lambda_h^2u_h^2\big){\rm d}x\bigg|.
\end{aligned}
\end{equation}
For the first term in R.H.S. of (\ref{threeTerms}),
\begin{equation}\label{threeFEw}
\begin{aligned}
&\ \ \bigg|\int_\Omega \Big(2 \nabla u\cdot {\rm D}\mathcal{V}\nabla u- 2 \nabla u_h\cdot {\rm D}\mathcal{V}\nabla u_h \Big){\rm d}x\bigg| \\
&=\bigg|\int_\Omega 2 (\nabla u- \nabla u_h) \cdot ({\rm D}\mathcal{V}+{\rm D}\mathcal{V}^T)\nabla u {\rm d}x -\int_\Omega 2 (\nabla u-\nabla u_h)\cdot {\rm D}\mathcal{V}(\nabla u-\nabla u_h) {\rm d}x\bigg|\\
&\leq\bigg|\int_\Omega 2 (\nabla u-\nabla u_h) \cdot ({\rm D}\mathcal{V}+{\rm D}\mathcal{V}^T)\nabla u{\rm d}x\bigg| +\bigg|\int_\Omega 2 (\nabla u-\nabla u_h)\cdot {\rm D}\mathcal{V}(\nabla u-\nabla u_h) {\rm d}x\bigg|\\
&\leq 2\|\nabla u- \nabla u_h\|_{H^{-s}(\Omega)}\|({\rm D}\mathcal{V}+{\rm D}\mathcal{V}^T)\nabla u\|_{H^{s}(\Omega)}+2\|{\rm D}\mathcal{V}\|_{L^\infty(\Omega)}\|\nabla u-\nabla u_h\|^2_{L^{2}(\Omega)}\\
&\leq C h^{2s}|u|^2_{H^{1+s}(\Omega)}|\mathcal{V}|_{W^{1,\infty}(\Omega)}
+C h^{2s}|u|^2_{H^{1+s}(\Omega)}|\mathcal{V}|_{W^{1,\infty}(\Omega)},
\end{aligned}
\end{equation}
where Lemma \ref{basicErrEst} is used in the last inequality.
For the second term in R.H.S. of (\ref{threeTerms}), we have analogously
\begin{equation}\label{SecErrEst}
\begin{aligned}
&\quad\bigg|\int_\Omega {\rm div} \mathcal{V}\big(|\nabla u|^2-|\nabla u_h|^2\big){\rm d}x\bigg|\\
&=\bigg|2\int_\Omega {\rm div} \mathcal{V} \nabla u\cdot(\nabla u-\nabla u_h){\rm d}x+\int_\Omega {\rm div} \mathcal{V} |\nabla u-\nabla u_h|^2{\rm d}x\bigg|\\
&\leq\bigg|2\int_\Omega {\rm div} \mathcal{V} \nabla u\cdot(\nabla u-\nabla u_h){\rm d}x\bigg|+\bigg|\int_\Omega {\rm div} \mathcal{V} |\nabla u-\nabla u_h|^2{\rm d}x\bigg|\\
%&\leq\bigg|-2\int_\Omega {\rm div}({\rm div} \mathcal{V} \nabla u)( u- u_h){\rm d}x\bigg|+\bigg|\int_\Omega {\rm div} \mathcal{V} |\nabla u-\nabla u_h|^2{\rm d}x\bigg|\\
&\leq 2\|{\rm div} \mathcal{V}\|_{L^\infty(\Omega)}\| \nabla u\|_{H^s(\Omega)}\| \nabla u- \nabla u_h\|_{H^{-s}(\Omega)}
+\|{\rm div}\mathcal{V}\|_{L^{\infty}(\Omega)} \|\nabla u-\nabla u_h\|_{L^2(\Omega)}^2\\
%&\leq \|\mathcal{V}\|_{W^{1,\infty}(\Omega)}\bigg|\int_\Omega\big(|\nabla u|^2-|\nabla u_h|^2\big){\rm d}x\bigg|\\
&\leq C h^{2s}|u|^2_{H^{1+s}(\Omega)}|\mathcal{V}|_{W^{1,\infty}(\Omega)}
+C h^{2s}|u|^2_{H^{1+s}(\Omega)}|\mathcal{V}|_{W^{1,\infty}(\Omega)}.%&\leq C_5C_2\|\mathcal{V}\|_{W^{2,4}(\Omega)}h^{2\mu-2}\|u\|_{H^{\mu}(\Omega)}.
\end{aligned}
\end{equation}
%The two terms on the right hand-side are estimated respectively as follows.
%\begin{equation}
%\bigg|\int_\Omega(|\nabla u|^2-|\nabla u_h|^2)dx\bigg|=|\lambda-\lambda_h|\leq Ch^2.
%\end{equation}
For the third term in R.H.S. of (\ref{threeTerms}), simple estimations yield
\begin{equation}\label{fxfs}
\begin{aligned}
&\quad\bigg|\int_\Omega{\rm div} \mathcal{V}(\lambda^2u^2-\lambda_h^2u_h^2){\rm d}x\bigg|\\
&=\bigg| \int_\Omega {\rm div} \mathcal{V}[2\lambda u(\lambda u-\lambda_hu_h)-(\lambda u-\lambda_hu_h)^2]{\rm d}x\bigg| \\
&\leq \|{\rm div}\mathcal{V}\|_{L^{\infty}(\Omega)} \bigg(\int_\Omega |2\lambda u(\lambda u-\lambda_hu_h)|{\rm d}x+\int_\Omega|\lambda u-\lambda_hu_h|^2 {\rm d}x\bigg) \\
%&\leq |\mathcal{V}|_{W^{1,\infty}(\Omega)} \bigg(\int_\Omega |2\lambda u(\lambda u-\lambda_hu_h)|{\rm d}x+\int_\Omega|\lambda u-\lambda_hu_h|^2 {\rm d}x\bigg)\\
&\leq C |\mathcal{V}|_{W^{1,\infty}(\Omega)} \Big(2\lambda \|u\|_{L^2(\Omega)}\|\lambda u-\lambda_h u_h\|_{L^2(\Omega)}+\|\lambda u-\lambda_h u_h\|^2_{L^2(\Omega)}\Big),
\end{aligned}
\end{equation}
where Cauchy-Schwarz inequality is used.
In (\ref{fxfs}), $\|u\|_{L^2(\Omega)}=1$ and
\begin{equation}\label{fwef}
\begin{aligned}
\|\lambda u-\lambda_h u_h\|_{L^2(\Omega)}&=\|\lambda(u-u_h)+(\lambda-\lambda_h)u_h\|_{L^2(\Omega)}\\
&\leq \lambda\| u-u_h\|_{L^2(\Omega)} + |\lambda-\lambda_h|\|u_h\|_{L^2(\Omega)} \\
&\leq C\lambda h^{1+s}|u|_{H^{1+s}(\Omega)} + C h^{2s}|u|_{H^{1+s}(\Omega)}  %&\leq C_{10} \lambda h^{\mu}\|u\|_{H^{\mu}(\Omega)}
\end{aligned}
\end{equation}
with $\|u_h\|_{L^2(\Omega)}=1$ and Lemma \ref{basicErrEst} being used.
Therefore,
\begin{equation}\label{ThirdErrEst}
\begin{aligned}
\bigg|\int_\Omega{\rm div} \mathcal{V}(\lambda^2u^2-\lambda_h^2u_h^2){\rm d}x\bigg| &\leq C|\mathcal{V}|_{W^{1,\infty}(\Omega)} \Big(2\lambda\big( C \lambda h^{1+s} + C h^{2s}\big)|u|_{H^{1+s}(\Omega)}\\
&\quad + \big( C \lambda h^{1+s} + C h^{2s}\big)^2|u|_{H^{1+s}(\Omega)}^2 \Big)\\
&\leq C \lambda^2 h^{2s}|\mathcal{V}|_{W^{1,\infty}(\Omega)}|u|_{H^{1+s}(\Omega)}.
\end{aligned}
\end{equation}
Substituting (\ref{threeFEw}), (\ref{SecErrEst}) and (\ref{ThirdErrEst}) into (\ref{threeTerms}) allows (\ref{DomainErrEst}) to hold.

Now we prove (\ref{DomainErrEst1}) when $\mathcal{V}\in H^2(\Omega)^d$ and $u\in W^{2,4}(\Omega)$. Each term on R.H.S. of (\ref{threeTerms}) is estimated differently from above due to different regularities on $u$ and $\mathcal{V}$. By H\"{o}lder's inequality, Lemma \ref{basicErrEst}, Lemma \ref{Lemma24}, and Sobolev embedding theorem, we have
\begin{equation}\label{threeFEwof}
\begin{aligned}
&\ \ \bigg|\int_\Omega \Big(2 \nabla u\cdot {\rm D}\mathcal{V}\nabla u- 2 \nabla u_h\cdot {\rm D}\mathcal{V}\nabla u_h \Big){\rm d}x\bigg| \\
&\leq 2\|\nabla u- \nabla u_h\|_{H^{-1}(\Omega)}\|({\rm D}\mathcal{V}+{\rm D}\mathcal{V}^T)\nabla u\|_{H^{1}(\Omega)}\\
&\quad+2\|{\rm D}\mathcal{V}\|_{L^4(\Omega)}\|\nabla u-\nabla u_h\|_{L^{4}(\Omega)}\|\nabla u-\nabla u_h\|_{L^{2}(\Omega)}\\
&\leq C h^{2}|u|_{H^{2}(\Omega)}(|\mathcal{V}|_{H^{2}(\Omega)}
|u|_{W^{1,\infty}(\Omega)} + |\mathcal{V}|_{W^{1,4}(\Omega)}|u|_{W^{2,4}(\Omega)})
\\
&\quad+Ch^{2}|u|_{H^{2}(\Omega)}|u|_{W^{2,4}(\Omega)}|\mathcal{V}|_{W^{1,4}(\Omega)}\\
&\leq C h^{2}|u|_{H^{2}(\Omega)}|\mathcal{V}|_{H^{2}(\Omega)}|u|_{W^{2,4}(\Omega)}
+C h^{2}|u|_{H^{2}(\Omega)}|u|_{W^{2,4}(\Omega)}|\mathcal{V}|_{H^{2}(\Omega)}
\end{aligned}
\end{equation}
and
\begin{equation}
\begin{aligned}
&\quad\bigg|\int_\Omega {\rm div} \mathcal{V}\big(|\nabla u|^2-|\nabla u_h|^2\big){\rm d}x\bigg|\\
&\leq\bigg|2\int_\Omega {\rm div} \mathcal{V} \nabla u\cdot(\nabla u-\nabla u_h){\rm d}x\bigg|+\bigg|\int_\Omega {\rm div} \mathcal{V} |\nabla u-\nabla u_h|^2{\rm d}x\bigg|\\
&\leq 2\|{\rm div} \mathcal{V} \nabla u\|_{H^1(\Omega)}\| \nabla u- \nabla u_h\|_{H^{-1}(\Omega)}\\
&\quad+\|{\rm div}\mathcal{V}\|_{L^{4}(\Omega)} \|\nabla u-\nabla u_h\|_{L^4(\Omega)}\|\nabla u-\nabla u_h\|_{L^2(\Omega)}\\
&\leq C h^{2}|u|_{H^{2}(\Omega)}\|{\rm div} \mathcal{V} \nabla u\|_{H^1(\Omega)}
+C h^{2}|u|_{H^{2}(\Omega)}|u|_{W^{2,4}(\Omega)}|\mathcal{V}|_{W^{1,4}(\Omega)}\\
&\leq C h^{2}|u|_{H^{2}(\Omega)}(|\mathcal{V}|_{W^{1,4}(\Omega)}| u|_{W^{2,4}(\Omega)}+|\mathcal{V}|_{H^{2}(\Omega)}|u|_{W^{1,\infty}(\Omega)}
+|u|_{W^{2,4}(\Omega)}|\mathcal{V}|_{W^{1,4}(\Omega)})\\
&\leq C h^{2}|u|_{H^{2}(\Omega)}|\mathcal{V}|_{H^{2}(\Omega)}|u|_{W^{2,4}(\Omega)},
\end{aligned}
\end{equation}
respectively.

For the third term in R.H.S. of (\ref{threeTerms}), simple estimations by H\"{o}lder's inequality, Sobolev embedding theorem, and (\ref{fwef}) with $s=1$ yield
\begin{equation}
\begin{aligned}
&\quad\bigg|\int_\Omega{\rm div} \mathcal{V}(\lambda^2u^2-\lambda_h^2u_h^2){\rm d}x\bigg|\\
&\leq \|{\rm div}\mathcal{V}\|_{L^{4}(\Omega)} \|\lambda u-\lambda_hu_h\|_{L^2(\Omega)}\|\lambda u+\lambda_hu_h\|_{L^4(\Omega)} \\
&\leq C |\mathcal{V}|_{H^{2}(\Omega)} h^2|u|_{H^2(\Omega)}\|\lambda u+\lambda_hu_h\|_{H^1(\Omega)},
\end{aligned}
\end{equation}
in which
\begin{equation*}
\begin{aligned}
&\quad\|\lambda u+\lambda_hu_h\|_{H^1(\Omega)}\\
&\leq \lambda\|u\|_{H^1(\Omega)}+\lambda_h\|u_h\|_{H^1(\Omega)} \\
&= \lambda\sqrt{\lambda} + \lambda_h\sqrt{\lambda_h}\\
&\leq \lambda\sqrt{\lambda} + (\lambda+Ch^2|u|_{H^2(\Omega)})^{\frac{3}{2}}.
\end{aligned}
\end{equation*}
%\qed
Thus, the conclusion follows by unifying the above results.
\end{proof}
\begin{remark}
In \cite{Hiptmair}, $H^2$ regularity is assumed for convergence analysis of approximate shape gradients in linear elliptic problems. We remark that the $H^2$ regularity fails to hold since the polyhedral domain, e.g., polygon, may easily lose convexity during shape evolutions. Therefore, it is reasonable to assume less regularity in convergence analysis of approximate shape gradients. In Theorem \ref{Thm:domain}, the more general regularity $H^{1+s}$ ($0<s\leq 1$) assumption is thus made for (\ref{DomainErrEst}). Moreover, another new result (\ref{DomainErrEst1}) is obtained under more regularity $W^{2,4}$ on $u$ and different $H^2$ regularity instead of $W^{1,\infty}$ on $\mathcal{V}$. This kind of result is absent in \cite{Hiptmair} and will be used for convergence analysis in Section 4 below.
\end{remark}

Now, we perform convergence analysis for the approximate boundary Eulerian derivative (\ref{discBB}). We have to first assume that $\Omega$ is convex $(d=2)$ or $C^2$ such that $u\in H^2(\Omega)$ for the continuous boundary formula (\ref{MultiShapeDer}) to hold.
\begin{theorem}\label{Thm:Boundary}
Let the assumptions in Theorem \ref{Thm:domain} hold with $s=1$. Assume further that
\begin{equation*}
\|u\|_{W^{2,p}(\Omega)}\leq C p \lambda\|u\|_{L^p(\Omega)}
\end{equation*}
for $1<p<\mu$ with some $\mu>d$.
Then,
\begin{equation*}\label{errfa}
|\lambda^\prime(\Omega;\mathcal{V})_{\partial\Omega} - \lambda^\prime(\Omega;\mathcal{V})_{\partial\Omega,h}| \leq
C|\log{h}|^{1-\frac{1}{d}}h|u|_{W^{2,\infty}(\Omega)}\|\mathcal{V}_n\|_{L^{\infty}(\partial\Omega)}.
\end{equation*}
\end{theorem}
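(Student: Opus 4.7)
The plan is to reduce the boundary integral to a H\"older-type product after the factorization
\[
\Big(\frac{\partial u}{\partial n}\Big)^2 - \Big(\frac{\partial u_h}{\partial n}\Big)^2 = \partial_n(u-u_h)\,\partial_n(u+u_h),
\]
which yields
\[
|\lambda'(\Omega;\mathcal{V})_{\partial\Omega}-\lambda'(\Omega;\mathcal{V})_{\partial\Omega,h}| \leq \|\mathcal{V}_n\|_{L^\infty(\partial\Omega)}\,\|\nabla(u-u_h)\|_{L^p(\partial\Omega)}\,\|\nabla(u+u_h)\|_{L^{p'}(\partial\Omega)},
\]
with $1/p+1/p'=1$ to be chosen later. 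The intent is that the first factor should carry the convergence rate, while the second stays uniformly bounded as $p'$ varies.

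For the error factor I would use the element-wise scaled trace inequality $\|w\|_{L^p(F)}^p \leq C(h^{-1}\|w\|_{L^p(K)}^p + h^{p-1}\|\nabla w\|_{L^p(K)}^p)$ on each boundary simplex $K$ with face $F\subset\partial\Omega$, taking $w=\nabla(u-u_h)$. Because $\nabla u_h|_K$ is piecewise constant one has $\nabla w|_K = \nabla^2 u|_K$; summing over boundary elements and invoking the standard $L^p$-error estimate $\|\nabla(u-u_h)\|_{L^p(\Omega)} \leq Ch\,|u|_{W^{2,p}(\Omega)}$ gives
\[
\|\nabla(u-u_h)\|_{L^p(\partial\Omega)} \leq C\, h^{1-1/p}\, |u|_{W^{2,p}(\Omega)}.
\]
The hypothesis $\|u\|_{W^{2,p}(\Omega)}\leq C p\lambda\|u\|_{L^p(\Omega)}$ combined with $\|u\|_{L^p}\leq|\Omega|^{1/p}\|u\|_{L^\infty}\leq C\|u\|_{W^{2,\infty}}$ then converts the seminorm into a constant that is linear in $p$. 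For the stable factor, combining the same boundary trace inequality with an $L^\infty$-type stability (quasi-optimality in $W^{1,\infty}$) of the Ritz projection $P_h$ introduced in \eqref{RitzDef} should yield $\|\nabla(u+u_h)\|_{L^{p'}(\partial\Omega)}\leq C\|u\|_{W^{2,\infty}(\Omega)}$ uniformly in $p'$.

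Finally, the resulting bound $C\,\|\mathcal{V}_n\|_{L^\infty}\,\|u\|_{W^{2,\infty}}^2 \cdot p\,h^{1-1/p}$ is to be optimized over $p\in(1,\mu)$. Writing $ph^{1-1/p} = h\,p\exp(-(\log h)/p)$, a naive choice $p\sim|\log h|$ would produce the factor $h|\log h|$; to sharpen it to $h|\log h|^{1-1/d}$ one must extract an extra $p^{-1/d}$ gain from the $(d-1)$-dimensional character of $\partial\Omega$, e.g.\ by coupling the element trace inequality with a Sobolev embedding on the surface or by an anisotropic interpolation between $L^p(\Omega)$ and $L^p(\partial\Omega)$ that respects the codimension-one structure. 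This dimensional accounting is the principal technical obstacle; the linear-in-$p$ hypothesis on $\|u\|_{W^{2,p}(\Omega)}$ is precisely what makes the logarithmic optimization close without losing a full power of $|\log h|$, and $\mu>d$ ensures the required $p$ remains admissible as $h\to 0$.
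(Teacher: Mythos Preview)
Your route via $L^p$-trace estimates and optimization over $p$ differs from the paper's, and the gap you yourself flag is genuine: there is no mechanism by which the codimension-one structure of $\partial\Omega$ yields exactly a $p^{-1/d}$ gain, so the step from $p\,h^{1-1/p}\sim h|\log h|$ to $h|\log h|^{1-1/d}$ cannot be closed along the lines you sketch. The exponent $1-1/d$ in the statement does not come from boundary geometry at all. Moreover, your ``stable factor'' bound $\|\nabla(u+u_h)\|_{L^{p'}(\partial\Omega)}\leq C\|u\|_{W^{2,\infty}}$ uniformly in $p'$ already presupposes $\|\nabla u_h\|_{L^\infty(\Omega)}\leq C$, which is essentially the $W^{1,\infty}$ control the paper establishes as its main step; once that is in hand the $L^p$ detour is redundant.

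The paper instead takes $p=\infty$ from the outset: after the same factorization it bounds $\|\partial_n(u-u_h)\|_{L^\infty(\partial\Omega)}$ by $\|u-u_h\|_{W^{1,\infty}(\Omega)}$ and proves
\[
\|u-u_h\|_{W^{1,\infty}(\Omega)} \leq C\,|\log h|^{1-1/d}\,h\,|u|_{W^{2,\infty}(\Omega)}
\]
by splitting through the Ritz projection $P_hu$. The piece $u-P_hu$ is the standard $W^{1,\infty}$ quasi-optimality of $P_h$. For $P_hu-u_h$ one exploits the superconvergence $\|\nabla(P_hu-u_h)\|_{L^2}\leq Ch^2$ of Lemma~\ref{basicErrEst1}: in $d=2$ an inverse inequality plus the discrete Sobolev inequality $\|v_h\|_{L^\infty}\leq C|\log h|^{1/2}\|\nabla v_h\|_{L^2}$ gives the factor $|\log h|^{1/2}$; in $d=3$ an inverse inequality combined with a Sobolev embedding at a floating exponent $p$ chosen so that $(3-p)|\log h|=p$, together with a duality bound on $\|\nabla(P_hu-u_h)\|_{L^3}$, gives $|\log h|^{2/3}$. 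The logarithmic power is thus the \emph{bulk} discrete Sobolev constant, not a boundary effect, and your $L^p(\partial\Omega)$ framework does not naturally access it.
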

\begin{proof}
By (\ref{ShapeDer}) and (\ref{discBB}), we first have
\begin{equation}\label{efwfoj}
\begin{aligned}
&\quad |\lambda^\prime(\Omega;\mathcal{V}) - \lambda^\prime(\Omega;\mathcal{V})_{\partial\Omega,h}|\\
&=\left|\int_{\partial\Omega}-\left[\left(\frac{\partial u}{\partial n}\right)^2-\left(\frac{\partial u_h}{\partial n}\right)^2\right]\mathcal{V}_n {\rm d}s\right|\\
&\leq \|\mathcal{V}_n\|_{L^\infty(\partial\Omega)}\int_{\partial\Omega}\left|\left(\frac{\partial u}{\partial n}\right)^2-\left(\frac{\partial u_h}{\partial n}\right)^2\right|{\rm d}s \\
&= \|\mathcal{V}_n\|_{L^\infty(\partial\Omega)}\int_{\partial\Omega}\left|2\frac{\partial u}{\partial n}\frac{\partial (u-u_h)}{\partial n}-\left[\frac{\partial (u-u_h)}{\partial n}\right]^2\right|{\rm d}s\\
&\leq \|\mathcal{V}_n\|_{L^\infty(\partial\Omega)}\left(2\left\|\frac{\partial u}{\partial n}\right\|_{L^1(\partial\Omega)}\left\|\frac{\partial (u-u_h)}{\partial n}\right\|_{L^\infty(\partial\Omega)}+|\partial\Omega|\left\|\frac{\partial (u-u_h)}{\partial n}\right\|_{L^\infty(\partial\Omega)}^2\right)
\end{aligned}
\end{equation}
where we have used the H\"{o}lder inequality in the last inequality. By the trace theorem \cite{Adams}, (\ref{efwfoj}) implies that
\begin{equation}\label{joiewfw}
\begin{aligned}
&\quad |\lambda^\prime(\Omega;\mathcal{V}) - \lambda^\prime(\Omega;\mathcal{V})_{\partial\Omega,h}|\\
&\leq C\|\mathcal{V}_n\|_{L^\infty(\partial\Omega)}\bigg(2\left\|\frac{\partial u}{\partial n}\right\|_{L^1(\partial\Omega)}\|u-u_h\|_{W^{1,\infty}(\Omega)}
+|\partial\Omega|\|u-u_h\|_{W^{1,\infty}(\Omega)}^2\bigg).
\end{aligned}
\end{equation}
%Let $\mu=\min\{r,s+1\}$.
Then, the conclusion follows using the \emph{a priori} error estimates in the norm $W^{1,\infty}(\Omega)$ \cite{Brenner}:
\begin{equation}\label{W1inftyErrEst}
\|u-u_h\|_{W^{1,\infty}(\Omega)}=
C|\log{h}|^{1-\frac{1}{d}}h|u|_{W^{2,\infty}(\Omega)}.
\end{equation}
What left now is to prove (\ref{W1inftyErrEst}).
First, we split the error and use the triangle inequality to obtain
\begin{equation}\label{xxx}
\|u-u_h\|_{W^{1,\infty}(\Omega)}\leq \|u-P_hu\|_{W^{1,\infty}(\Omega)}+\|P_hu-u_h\|_{W^{1,\infty}(\Omega)},
\end{equation}
where $P_h: H^1_0(\Omega)\rightarrow V_h$ is defined in (\ref{RitzDef}).
In (\ref{xxx}), the error estimate for the first term on the R.H.S. is standard (Corollary 8.1.12 \cite{Brenner}):
\begin{equation}\label{standard}
\|u-P_h u\|_{W^{1,\infty}(\Omega)}\leq
C h|u|_{W^{2,\infty}(\Omega)}.
\end{equation}
To estimate $\|P_hu-u_h\|_{W^{1,\infty}(\Omega)}$, the methods we use are different for $d=2$ and $d=3$. We discuss them separately. For $d=2$,
%we take $v_h=P_h u-u_h$ in(\ref{varForm}), (\ref{FEMvarForm}) and (\ref{RitzDef}). Then, we have
%\begin{equation*}
%\begin{aligned}
%(\nabla(P_hu-u_h),\nabla(P_hu-u_h))
%&= (\lambda u-\lambda_hu_h,P_hu-u_h)\\
%&= \big(\lambda(u-u_h)+(\lambda-\lambda_h)u_h,P_hu-u_h\big).
%\end{aligned}
%%\leq Ch^2\|P_hu-u_h\|_1
%\end{equation*}
%Then, by the Cauchy-Schwarz inequality and triangle inequality,
%\begin{equation*}
%\begin{aligned}
%\|\nabla(P_hu-u_h)\|_{L^2(\Omega)}^2 &\leq \big(\lambda \|u-u_h\|_{L^2(\Omega)}+|\lambda-\lambda_h|\|u_h\|_{L^2(\Omega)}\big)\|P_hu-u_h\|_{L^2(\Omega)}\\
%&=\big(\lambda \|u-u_h\|_{L^2(\Omega)}+|\lambda-\lambda_h|\big)\|P_hu-u_h\|_{L^2(\Omega)}\\
%&\leq C\big(\lambda \|u-u_h\|_{L^2(\Omega)}+|\lambda-\lambda_h|\big)\|\nabla(P_hu-u_h)\|_{L^2(\Omega)},
%\end{aligned}
%\end{equation*}
%where the Poincar\'{e} inequality is used in the last inequality.
%Therefore,
%\begin{equation}\label{nablaPhUh}
%\begin{aligned}
%\|\nabla(P_hu-u_h)\|_{L^2(\Omega)} &\leq C\big(\lambda \|u-u_h\|_{L^2(\Omega)}+|\lambda-\lambda_h|\big)\\
%%&=\mathcal{O}(h^{s+1}) + \mathcal{O}(h^{2s})=\mathcal{O}(h^{s+1})
% &\leq C\big(\lambda C h^{2} + Ch^{2}\big)|u|_{H^{2}(\Omega)}\\
% &\leq C h^{2}|u|_{H^{2}(\Omega)}
%\end{aligned}
%\end{equation}
%using Lemma \ref{basicErrEst}.
%By Lemma 6.4 of \cite{Thomee}, we have
%\begin{equation}
%\|v_h\|_{L^\infty(\Omega)} \leq C|\log{h}|^{1/2}\|\nabla v_h\|_{L^2(\Omega)}\quad \forall v_h\in V_h.
%\end{equation}
by the inverse inequality (see e.g., \cite{Brenner}), discrete Sobolev inequality (Lemma 4.9.2 of \cite{Brenner}) and Lemma \ref{basicErrEst1}, we obtain
\begin{equation}\label{ffx}
\begin{aligned}
\|P_hu-u_h\|_{W^{1,\infty}(\Omega)}&\leq Ch^{-1}\|P_hu-u_h\|_{L^{\infty}(\Omega)}\\
&\leq Ch^{-1}|\log{h}|^{1/2}\|\nabla (P_hu-u_h)\|_{L^{2}(\Omega)}\\
&\leq C |\log{h}|^{1/2}h|u|_{H^{2}(\Omega)},\quad d=2.
\end{aligned}
\end{equation}
%A combination of (\ref{standard}) and (\ref{ffx}) implies the result.

For $d=3$, we have to use different arguments since no result as Lemma 4.9.2 of \cite{Brenner} is available. Let $p<3$.
By the inverse inequality \cite{Brenner}, Theorem 7.10 and its remark in \cite{2ndPDE}
\begin{equation}\label{ffa}
\begin{aligned}
\|P_hu-u_h\|_{W^{1,\infty}(\Omega)} &\leq Ch^{-1-\frac{3-p}{p}}\|P_hu-u_h\|_{L^{{3p}/{(3-p)}}(\Omega)}\\
&\leq CC_{p}h^{-1-\frac{3-p}{p}}\|\nabla(P_hu-u_h)\|_{L^p(\Omega)},
\end{aligned}
\end{equation}
where
\begin{equation*}
C_{p}=\frac{1}{3\sqrt{\pi}}\bigg(\frac{3 !\Gamma{(3/2)}}{2\Gamma{(3/p)}\Gamma{(4-3/p)}}\bigg)^{1/3}\bigg[\frac{3(p-1)}{3-p}\bigg]^{1-1/p}
\end{equation*}
with $\Gamma(\cdot)$ denoting the gamma function.
Using H\"{o}lder's inequality
\begin{equation}\label{Holder}
\|v\|_{L^p(\Omega)} \leq |\Omega|^{\frac{3-p}{3p}}\|v\|_{L^3(\Omega)}\quad \forall v\in L^3(\Omega)
\end{equation}
in (\ref{ffa}) and choosing $p$ such that $(3-p)|\log{h}|=p$, we get
\begin{equation}\label{dim3ErrEst}
\begin{aligned}
\|P_hu-u_h\|_{W^{1,\infty}(\Omega)} &\leq Ch^{-1-\frac{3-p}{p}}|\Omega|^{\frac{3-p}{3p}}\|\nabla(P_hu-u_h)\|_{L^3(\Omega)}\\
& \leq C h^{-1}|\log{h}|^{2/3} \|\nabla(P_hu-u_h)\|_{L^3(\Omega)}.
\end{aligned}
\end{equation}
Since
\begin{equation*}
\begin{aligned}
\|\nabla(P_hu-u_h)\|_{L^3(\Omega)}&\leq C \sup_{0\neq v_h\in V_h} \frac{(\nabla(P_hu-u_h),\nabla v_h)}{|v_h|_{W^{1,3/2}(\Omega)}} \quad ({\rm Proposition}\ 8.6.2 \ [12])\\
&=C\sup_{0\neq v_h\in V_h} \frac{\big(\lambda(u-u_h)+(\lambda-\lambda_h)u_h,v_h\big)}{|v_h|_{W^{1,3/2}(\Omega)}}\\
&\leq C\sup_{0\neq v_h\in V_h} \frac{\big(\lambda \|u-u_h\|_{L^2(\Omega)}+|\lambda-\lambda_h|\big)\|v_h\|_{L^2(\Omega)}}{|v_h|_{W^{1,3/2}(\Omega)}}\\
&\leq Ch^{2}|u|_{H^{2}(\Omega)}\sup_{0\neq v_h\in V_h} \frac{\|v_h\|_{L^2(\Omega)}}{|v_h|_{W^{1,3/2}(\Omega)}}  \quad ({\rm Lemma}\ \ref{basicErrEst}) \\
&\leq Ch^{2}|u|_{H^{2}(\Omega)}\sup_{0\neq v_h\in V_h} \frac{\|v_h\|_{L^2(\Omega)}}{\|v_h\|_{L^3(\Omega)}}  \\
&\leq  Ch^{2}|u|_{H^{2}(\Omega)}|\Omega|^{1/6}
\end{aligned}
\end{equation*}
using Sobolev embedding Theorem and (\ref{Holder}) in the last two inequalities,
(\ref{dim3ErrEst}) implies that
\begin{equation}\label{efwio}
\|P_hu-u_h\|_{W^{1,\infty}(\Omega)}= C|\Omega|^{1/6}|\log{h}|^{2/3}h|u|_{H^{2}(\Omega)},\quad d=3.
\end{equation}
Finally, a combination of (\ref{standard}), (\ref{ffx}) and (\ref{efwio}) allows us to arrive at (\ref{W1inftyErrEst}).
%\qed
\end{proof}

\begin{remark}
  Comparing with Theorem $\ref{Thm:domain}$, more regularity on $u$ is required in Theorem \ref{Thm:Boundary}.
  However, the converge rate (interpreted as $\mathcal{O}(h^{1-\epsilon})$ for any small $\epsilon>0$) obtained in Theorem \ref{Thm:Boundary} is lower than $\mathcal{O}(h^{2})$ in Theorem $\ref{Thm:domain}$.
\end{remark}

\subsection{Multiple eigenvalue case}
Now we turn to the more complicated multiple eigenvalue case. We show the \emph{a priori} error estimates for the approximate volume and boundary Eulerian derivatives. The directional derivatives for this case are stated in Proposition \ref{MultExp}. We first recall Weyl's inequality for estimating perturbations of the spectrum in matrix theory \cite{Zhan}.
\begin{lemma}\label{MatrixPerturbation}
Let matrices $\mathcal{A}=[a_{ij}]$ and $\mathcal{A}^h=[a_{ij}^h]\in \mathbb{R}^{l\times l}$ be symmetric. If the entries satisfy that $|a_{ij}-a_{ij}^h|=\mathcal{O}(h^\vartheta)$ with some $\vartheta>0$ for $i,j=1,2,\cdots,l$. Denote by $\{\theta_i\}_{i=1}^l$ (resp. $\{\theta_i^h\}_{i=1}^l$) the eigenvalues of $\mathcal{A}$ (resp. $\mathcal{A}^h$). Then, %there exists a constant $C>0$ such that
\begin{equation}
\max_{1\leq i\leq l} |\theta_i-\theta_i^h| =\mathcal{O} (l^{3/2}h^\vartheta).
\end{equation}
\end{lemma}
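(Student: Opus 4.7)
The plan is to reduce the statement to a classical perturbation inequality for symmetric matrices and then convert the entrywise hypothesis on $E := \mathcal{A} - \mathcal{A}^h$ into an operator-norm bound. Since both $\mathcal{A}$ and $\mathcal{A}^h$ are symmetric, their spectra are real; I would first list the eigenvalues in nondecreasing order, $\theta_1 \leq \cdots \leq \theta_l$ and $\theta_1^h \leq \cdots \leq \theta_l^h$, so that the pairing underlying the conclusion is unambiguous.

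The central tool is \emph{Weyl's perturbation inequality}, a direct consequence of the Courant--Fischer min--max characterization of eigenvalues: for any two Hermitian matrices of the same size,
\[
  \max_{1 \leq i \leq l} |\theta_i - \theta_i^h| \;\leq\; \|E\|_2,
\]
where $\|\cdot\|_2$ denotes the spectral norm. Equivalently one could invoke Hoffman--Wielandt, which controls the $\ell^2$-distance between the sorted spectra by the Frobenius norm of the perturbation. I would then pass to an entrywise estimate via
\[
  \|E\|_2 \;\leq\; \|E\|_F \;=\; \Bigl(\sum_{i,j=1}^{l} |a_{ij} - a_{ij}^h|^2\Bigr)^{1/2} \;\leq\; l \max_{i,j} |a_{ij} - a_{ij}^h|,
\]
and then substitute the hypothesis $|a_{ij} - a_{ij}^h| = \mathcal{O}(h^\vartheta)$. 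This chain alone already delivers $\max_i|\theta_i - \theta_i^h| = \mathcal{O}(l h^\vartheta)$; the stated factor $l^{3/2}$ is recovered by inserting a slightly cruder intermediate step, for instance bounding the maximum by the sum and applying Cauchy--Schwarz in the form $\max_i|\theta_i - \theta_i^h| \leq \sum_i|\theta_i - \theta_i^h| \leq \sqrt{l}\,\|E\|_F$, which is in any case adequate for the downstream error analysis of shape gradients.

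There is essentially no serious obstacle. The only bookkeeping point is to ensure that both spectra are matched in the same order so that Weyl's inequality applies directly, which is automatic under the symmetry hypothesis; everything else is standard matrix analysis. No eigenfunction information or finite element regularity enters the proof---the lemma is purely algebraic and will be used as a black box in the subsequent treatment of the multiple eigenvalue case.
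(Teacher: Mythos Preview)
Your proposal is correct and follows essentially the same strategy as the paper: invoke Weyl's inequality to bound $\max_i|\theta_i-\theta_i^h|$ by the spectral norm of $E=\mathcal{A}-\mathcal{A}^h$, then convert the entrywise hypothesis into an operator-norm bound. The only difference is the norm comparison used in the second step. The paper passes through the $\ell^\infty$ matrix norm,
\[
\|E\|_2 \leq \sqrt{l}\,\|E\|_\infty = \sqrt{l}\,\max_i\sum_j|a_{ij}-a_{ij}^h| \leq l^{3/2}\max_{i,j}|a_{ij}-a_{ij}^h|,
\]
which produces the stated $l^{3/2}$ directly. Your route through the Frobenius norm, $\|E\|_2\leq\|E\|_F\leq l\max_{i,j}|a_{ij}-a_{ij}^h|$, is in fact sharper and yields $\mathcal{O}(lh^\vartheta)$; you correctly observe that the extra $\sqrt{l}$ in the paper's bound is not needed, and your suggested detour via $\sum_i|\theta_i-\theta_i^h|\leq\sqrt{l}\,\|E\|_F$ to recover $l^{3/2}$ is unnecessary for the downstream application. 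Either way the argument is complete and purely algebraic.
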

\begin{proof}
We first obtain symmetry of $\mathcal{A}-\mathcal{A}^h$ since both $\mathcal{A}$ and $\mathcal{A}^h$ are symmetric. Then, the spectral norm and eigenvalues satisfy that $\|\mathcal{A}-\mathcal{A}^h\|_2=\max_{i}|\theta_i-\theta_i^h|$. For matrix norms, we easily have \cite{Demmel}
\begin{equation*}
\|\mathcal{A}-\mathcal{A}^h\|_2\leq \sqrt{l}\|\mathcal{A}-\mathcal{A}^h\|_\infty.
\end{equation*}
Thus,
\begin{equation*}
\begin{aligned}
\max_{1\leq i\leq l}|\theta_i-\theta_i^h| & \leq \sqrt{l}\|\mathcal{A}-\mathcal{A}^h\|_\infty  \\
& = \sqrt{l}\max_{1\leq i\leq l}\sum_{j=1}^l |a_{ij}-a^h_{ij}|.
\end{aligned}
\end{equation*}
The result follows from the known condition on perturbation bounds of entries.
%\qed
\end{proof}
Then, we obtain for approximations (\ref{FEM_MultiThmDerEq}) and (\ref{FEM_MultiShapeDer}) of multiple eigenvalues.
\begin{theorem}\label{MultiThm:Boundary}
Let the assumptions in Lemma \ref{basicErrEst} and Proposition \ref{MultExp} hold. Denote by $\{u_{i,h}\}_{i=1}^l$ the Lagrange finite element approximations of eigenfunctions $\{u_{i}\}_{i=1}^l$. Then,
%$\Omega$ is of class $C^{s+1}$ with $s\geq 0$.%Assume that $\mathcal{V}\in W^{1,\infty}(\mathbb{R}^d;\mathbb{R}^d)$.
\begin{equation*}
\max_{1\leq i \leq l}|\sigma_{\Omega,i} - \sigma_{\Omega,i}^h| \leq C l^{3/2} h^{2s}\max_{1\leq i\leq l}|u_i|_{H^{1+s}(\Omega)}|\mathcal{V}|_{W^{1,\infty}(\Omega)},\quad 0< s\leq 1.
\end{equation*}
If $\mathcal{V}\in H^2(\Omega)^d$ and moreover $u\in W^{2,4}(\Omega)$, then
\begin{equation*}
\max_{1\leq i \leq l}|\sigma_{\Omega,i} - \sigma_{\Omega,i}^h| \leq C l^{3/2} h^{2}\max_{1\leq i\leq l}|u_i|_{W^{2,4}(\Omega)}|\mathcal{V}|_{H^{2}(\Omega)}.
\end{equation*}
Assume that $u_i\in W^{2,\infty}(\Omega)$ for the boundary formula ($i=1,2,\cdots,l$). We have
\begin{equation*}
\max_{1\leq i\leq l}|\sigma_{\partial\Omega,i} - \sigma_{\partial\Omega,i}^h| \leq C l^{3/2} h^{1-\epsilon}\max_{1\leq i\leq l}|u_i|_{W^{2,\infty}(\Omega)}|\mathcal{V}_n|_{L^{\infty}(\partial\Omega)}, \quad 0< \epsilon\ll 1.
\end{equation*}
\end{theorem}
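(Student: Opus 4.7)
The plan is to reduce all three estimates to Lemma \ref{MatrixPerturbation} by bounding the entrywise perturbations $|m_{i,j}-m_{i,j}^h|$ of $\mathcal{M}_\Omega$ vs.\ $\mathcal{M}_\Omega^h$ and of $\mathcal{M}_{\partial\Omega}$ vs.\ $\mathcal{M}_{\partial\Omega}^h$ at rates $h^{2s}$, $h^{2}$, and $h^{1-\epsilon}$ respectively, and then invoking Weyl's inequality with matrix size $l$. Symmetry of all four matrices, which is what triggers Lemma \ref{MatrixPerturbation}, is immediate from the $(u_i,u_j)$-symmetry of the integrands in (\ref{MultiThmDerEq}) and (\ref{MultiShapeDer}).

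For the two volume estimates I would use the bilinear expansion $\nabla u_i\cdot\nabla u_j-\nabla u_{i,h}\cdot\nabla u_{j,h}=\nabla(u_i-u_{i,h})\cdot\nabla u_j+\nabla u_{i,h}\cdot\nabla(u_j-u_{j,h})$, and the analogous one for the $L^2$ terms $\lambda u_iu_j-\lambda u_{i,h}u_{j,h}$. Every term then has the structure ``smooth eigenfunction (or its discrete counterpart) paired with a single eigenfunction error $u_k-u_{k,h}$'', so the scalar argument from the proof of Theorem \ref{Thm:domain} applies line by line: $H^{-s}$--$H^{s}$ duality on the gradient error, $L^\infty$ (or $L^4$) control of ${\rm D}\mathcal{V}$ and ${\rm div}\mathcal{V}$, and the $L^2$ bound (\ref{fwef}) applied to each eigenfunction separately, combining Lemma \ref{basicErrEst} and Lemma \ref{Lemma24} index by index. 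Passing to the maximum over the cluster turns the single-eigenfunction seminorms into $\max_{1\le k\le l}|u_k|_{H^{1+s}(\Omega)}$ and $\max_{1\le k\le l}|u_k|_{W^{2,4}(\Omega)}$, and Lemma \ref{MatrixPerturbation} with $\vartheta=2s$ (resp.\ $\vartheta=2$) then outputs the first two bounds, with the $l^{3/2}$ prefactor coming directly from that lemma.

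For the boundary entry I would split $\frac{\partial u_i}{\partial n}\frac{\partial u_j}{\partial n}-\frac{\partial u_{i,h}}{\partial n}\frac{\partial u_{j,h}}{\partial n}$ in the same bilinear way and then repeat the chain (\ref{efwfoj})--(\ref{joiewfw}) from Theorem \ref{Thm:Boundary}, with the role of the single eigenfunction $u$ played by $u_i$ (or $u_{j,h}$) and the role of $u-u_h$ played by $u_j-u_{j,h}$ (or $u_i-u_{i,h}$). The trace theorem reduces everything to interior $W^{1,\infty}(\Omega)$ norms, and the key $W^{1,\infty}$ convergence estimate (\ref{W1inftyErrEst}) already proved inside Theorem \ref{Thm:Boundary} delivers an entrywise bound of order $C|\log h|^{1-1/d}h\max_k|u_k|_{W^{2,\infty}(\Omega)}\|\mathcal{V}_n\|_{L^\infty(\partial\Omega)}$, which collapses to the advertised $h^{1-\epsilon}$ rate by absorbing the logarithmic factor. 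A final application of Lemma \ref{MatrixPerturbation} closes the argument.

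The main technical obstacle I anticipate is organizational rather than analytic: in the multiple-eigenvalue setting the $L^2$-orthonormal continuous basis $\{u_i\}_{i=1}^l$ of the eigenspace associated with $\lambda$ is not unique, and for $\mathcal{M}^h$ to truly be a symmetric matrix one must work with the $L^2$-orthonormal discrete basis $\{u_{i,h}\}_{i=1}^l$ supplied by Lemma \ref{basicErrEst}, paired with a specific choice of $\{u_i\}$. One therefore has to fix the pairing $u_i\leftrightarrow u_{i,h}$ so that the \emph{a priori} bounds of Lemma \ref{basicErrEst} hold simultaneously for every index in the multiplicity cluster; once that is done, the bilinear expansions above reduce every cross-term to the scalar machinery already developed for the simple eigenvalue case in Theorems \ref{Thm:domain} and \ref{Thm:Boundary}, and the proof becomes essentially bookkeeping.
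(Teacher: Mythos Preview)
Your proposal is correct and follows essentially the same approach as the paper: bound the entrywise perturbations $|m_{i,j}-m_{i,j}^h|$ by adapting the scalar arguments of Theorems \ref{Thm:domain} and \ref{Thm:Boundary} to the bilinear setting, then apply Lemma \ref{MatrixPerturbation}. The paper's own proof is in fact more terse than yours---it simply asserts that the arguments of the simple-eigenvalue theorems can be ``modified'' to yield the three entrywise bounds and then invokes Lemma \ref{MatrixPerturbation}, without writing out the bilinear expansions or the eigenfunction-pairing caveat you (rightly) flag.
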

\begin{proof}
We can modify the arguments in Theorems \ref{Thm:domain} and \ref{Thm:Boundary} for the simple eigenvalue case to obtain that
\begin{equation*}
|m_{i,j}(\Omega;\mathcal{V})_{\Omega}-m^h_{i,j}(\Omega;\mathcal{V})_{\Omega}|\leq C h^{2s}
\max_{1\leq i\leq l}|u_i|_{H^{2}(\Omega)}|\mathcal{V}|_{W^{1,\infty}(\Omega)},
%=\mathcal{O}(h^{\mu})
\end{equation*}
\begin{equation*}
|m_{i,j}(\Omega;\mathcal{V})_{\Omega}-m^h_{i,j}(\Omega;\mathcal{V})_{\Omega}| \leq C  h^{2}\max_{1\leq i\leq l}|u_i|_{W^{2,4}(\Omega)}|\mathcal{V}|_{H^{2}(\Omega)}
\end{equation*}
and
\begin{equation*}
|m_{i,j}(\Omega;\mathcal{V})_{\partial\Omega}-m^h_{i,j}(\Omega;\mathcal{V})_{\partial\Omega}|
\leq C h^{1-\epsilon}\max_{1\leq i\leq l}|u_i|_{W^{2,\infty}(\Omega)}|\mathcal{V}_n|_{L^{\infty}(\partial\Omega)}
%=\mathcal{O}(h^{s-\epsilon}),
\end{equation*}
for $i,j=1,\cdots,l$ and $\epsilon>0$. By Lemma \ref{MatrixPerturbation}, the conclusions follow.
%\qed
\end{proof}
\begin{remark}
The results of Laplacian above may can be generalized for a self-adjoint and uniformly elliptic second-order differential operator $L$ such that
\begin{equation*}
Lu:=-\sum_{i,j=1}^d\frac{\partial}{\partial x_i}\left(a_{ij}(x)\frac{\partial u}{\partial x_j}\right) + a_0(x) u
\end{equation*}
with, e.g., coefficients $a_0(x)$ and $a_{ij}(x)\in C^1(\overline{\Omega})$ for $i,j = 1,\cdots,d$.
\end{remark}
\section{Numerical results} \label{Numer}
We perform numerical experiments with FreeFem++ \cite{Freefem}. We consider only the cases of simple eigenvalues for simplicity. Examples corresponding to both Dirichlet and Neumann boundary conditions are presented. We choose three computational domains in $\mathbb{R}^2$: unit square, unit disk and a L-shaped domain ($(-1,1)^2$ missing the upper right quarter). In Fig. \ref{FigMeshes}, one level of triangulation is illustrated. To study $h$-convergence, uniform refinement is employed. The eigenfunctions in the first two cases have enough smoothness and even can be extended to entire functions, whereas the eigenfunction on the L-shaped domain has a singularity at the reentrant corner. Lagrange Linear element is employed in all cases. We approximate the first eigenvalue and the first non-zero eigenvalue for the Dirichlet boundary condition and Neumann boundary condition, respectively.

%\subsection{Shape gradients in Eulerian derivatives}
We first verify numerically the theoretical results in Section 3. The shape gradient for simple eigenvalue is a linear and continuous operator on $W^{1,\infty}(\mathbb{R}^d;\mathbb{R}^d)$ and belongs to its dual space either in the volume or boundary type Eulerian derivative. As noted in \cite{Hiptmair}, it is challenging to compute numerically in the continuous infinite-dimensional operator norm for the approximate shape gradients. This norm can be approximately replaced by a more tractable one on a finite-dimensional subspace of $W^{1,\infty}(\mathbb{R}^d;\mathbb{R}^d)$.
More precisely, given a positive integer $\gamma$ as in \cite{Hiptmair}, we consider approximate operator norm on a finite-dimensional space consisting of vector fields in $\mathcal{P}_{\gamma,\gamma}(\mathbb{R}^d;\mathbb{R}^d)(\subset W^{1,\infty}(\mathbb{R}^d;\mathbb{R}^d))$, whose components are multivariate polynomials of degree not more than $\gamma$. We replace the $W^{1,\infty}$-norm with the more easily computable $H^1$-norm due to the equivalence of norms over finite-dimensional spaces. Finally, we compute the approximate dual norms
\begin{equation}\label{errorB}
\begin{aligned}
&\mathcal{E}_{\Omega}:=\left(
\max_{0\neq \mathcal{V}\in \mathcal{P}_{\gamma,\gamma}(\mathbb{R}^2;\mathbb{R}^2)}
\frac{|\lambda^\prime(\Omega;\mathcal{V})_{\Omega} - \lambda^\prime(\Omega;\mathcal{V})_{\Omega,h}|^2}{\|\mathcal{V}\|^2_{H^1(\Omega)}}\right)^{1/2},\\
&\mathcal{E}_{\partial\Omega}:=\left(
\max_{0\neq \mathcal{V}\in \mathcal{P}_{\gamma,\gamma}(\mathbb{R}^2;\mathbb{R}^2)}
\frac{|\lambda^\prime(\Omega;\mathcal{V})_{\partial\Omega} - \lambda^\prime(\Omega;\mathcal{V})_{\partial\Omega,h}|^2}{\|\mathcal{V}\|^2_{H^1(\Omega)}}\right)^{1/2}.
\end{aligned}
\end{equation}
%and
%\begin{equation}\label{errorV}
%\begin{aligned}
%\mathcal{E}_b:=\left(
%\max_{0\neq V\in {P}_{s,s}(\mathbb{R}^2;\mathbb{R}^2)}\frac{|\sigma_{\Omega,p} - \sigma_{\partial\Omega,p}^h|^2}{\|V\|^2_{H^1(\Omega)}}\right)^{1/2}\\
%\mathcal{E}_v:=\left(
%\max_{0\neq V\in {P}_{s,s}(\mathbb{R}^2;\mathbb{R}^2)}\frac{|\sigma_{\Omega,p} - \sigma_{\Omega,p}^h|^2}{\|V\|^2_{H^1(\Omega)}}\right)^{1/2}.
%\end{aligned}
%\end{equation}
We take a basis $\{\mathcal{V}_i\}_{i=1}^{q}$ of vector fields in $\mathcal{P}_{\gamma,\gamma}(\mathbb{R}^d;\mathbb{R}^d)$ with $q=dC_{\gamma+d}^d$ denoting the combination coefficient and
\begin{equation*}
\{\mathcal{V}_i\}_{i=1}^{q}=
\big\{[\Pi_{i=1}^d x_i^{\beta_i}, 0,\cdots,0],
\cdots,
[0, \cdots,0,\Pi_{i=1}^d x_i^{\beta_i} ]
\big\}_{\sum_{i=1}^d\beta_i\leq \gamma},
\end{equation*}
where $\beta_i$ ($i=1,\cdots,d$) are non-negative integers. Denote by $\mathbb{K}=[(\mathcal{V}_i,\mathcal{V}_j)_{H^1(\Omega)}]_{i,j=1}^{q}\in\mathbb{R}^{q\times q}$ be the Gramian matrix associated with the $H^1(\Omega)$ inner product.
For the simple eigenvalue case, the errors (\ref{errorB}) can be obtained by simply computing
\begin{equation}\label{wKw}
\mathcal{E} := \left(w^T\mathbb{K}^{-1}w\right)^{1/2},
\end{equation}
where $\mathcal{E}=\mathcal{E}_{\Omega}$ or $\mathcal{E}_{\partial\Omega}$ corresponds to $w=w_{\Omega}$ or $w_{\partial\Omega}$ with the vectors
\begin{equation*}
w_{\Omega}:=[\lambda^\prime(\Omega;\mathcal{V}_i)_{\Omega} - \lambda^\prime(\Omega;\mathcal{V}_i)_{\Omega,h}]_{i=1}^{q}\quad{\rm and}\quad w_{\partial\Omega}:=[\lambda^\prime(\Omega;\mathcal{V}_i)_{\partial\Omega} - \lambda^\prime(\Omega;\mathcal{V}_i)_{\partial\Omega,h}]_{i=1}^{q}.
\end{equation*}

\begin{figure}[htb]
\begin{minipage}[b]{0.33\textwidth}
\centering
\includegraphics[width=1.5in]{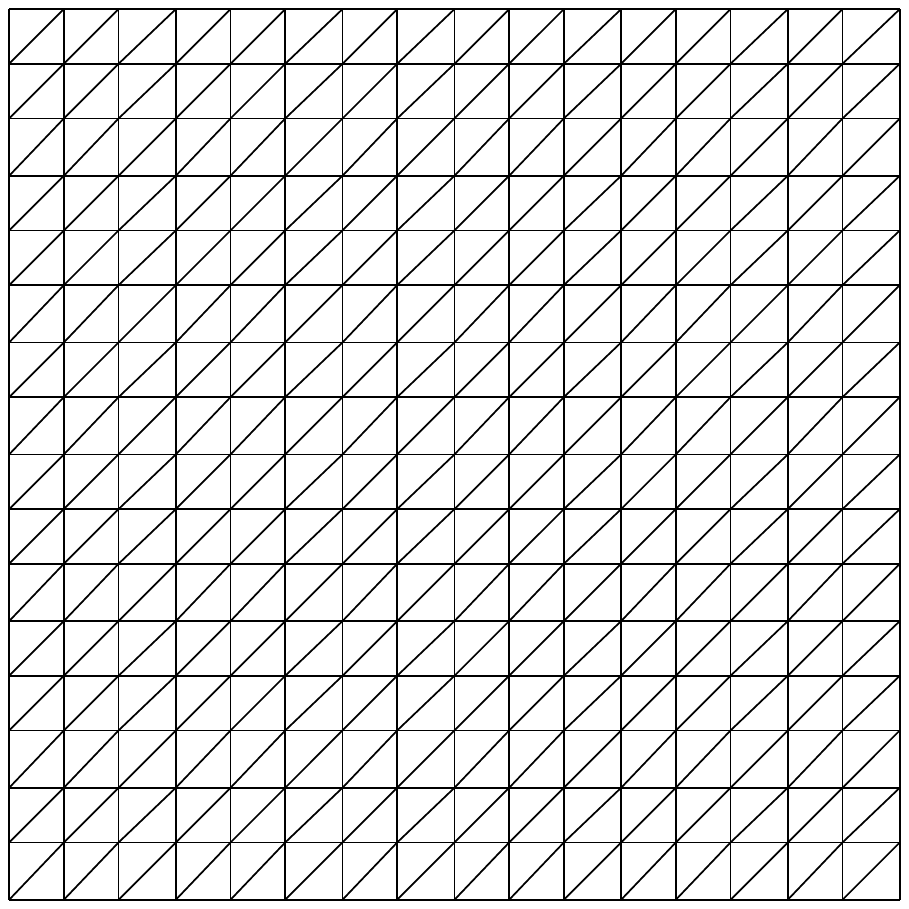}
\end{minipage}%}
\begin{minipage}[b]{0.33\textwidth}
\centering
\includegraphics[width=1.5in]{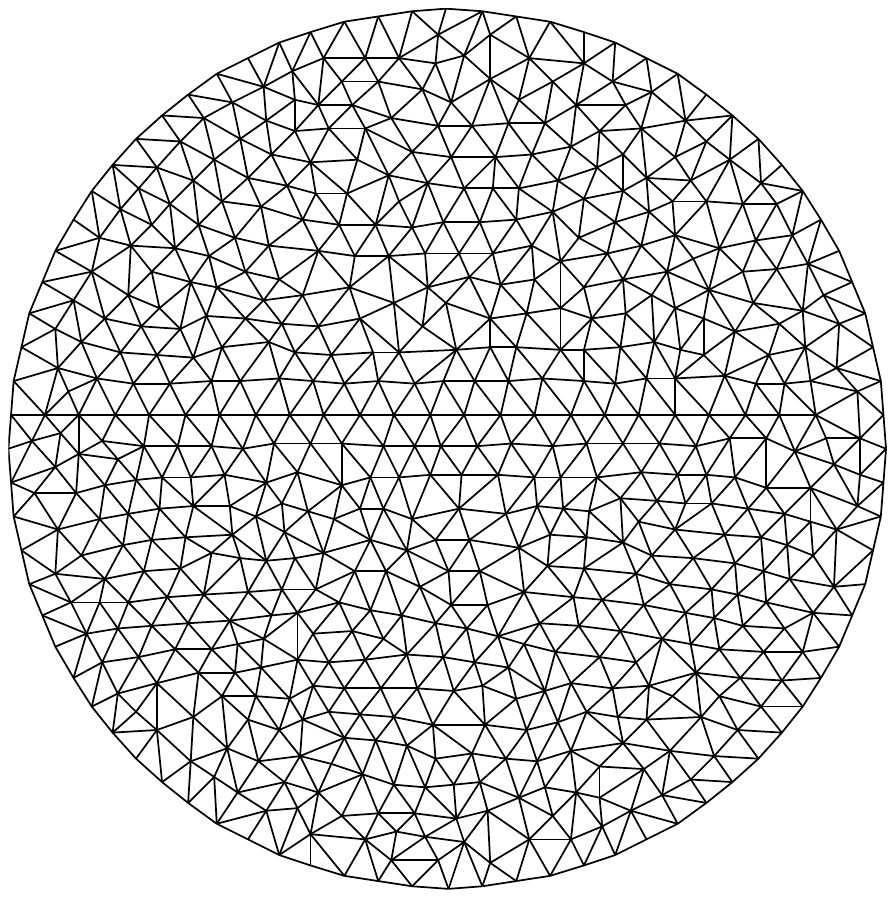}
\end{minipage}%}
\begin{minipage}[b]{0.33\textwidth}
\centering
\includegraphics[width=1.5in]{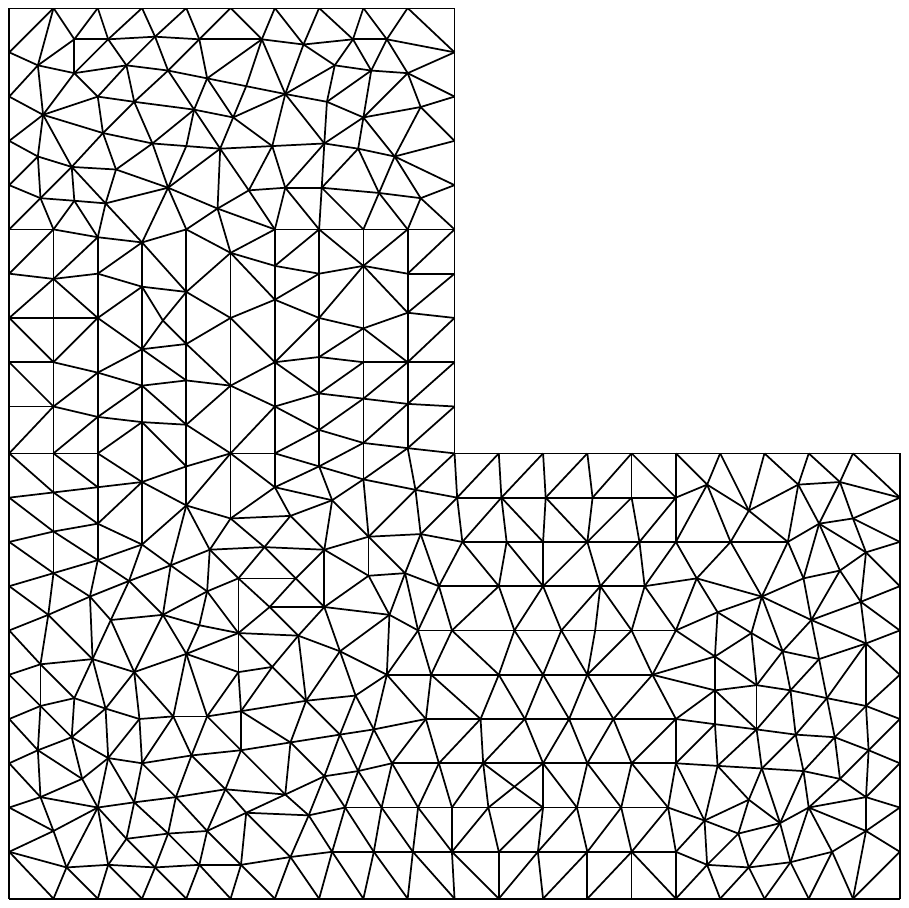}
\end{minipage}%\sidecaption
\caption{One level of meshes used for the square, disk, and L-shaped domain.}
\label{FigMeshes} %% label for entire figure
\end{figure}

\subsection{Dirichlet cases}
For square, uniform triangulation is used. The exact first eigenpair is
 $\big(2\pi^2,2\sin(\pi x_1)\sin(\pi x_2)\big)$.
\begin{figure}[htb]
\begin{minipage}[b]{0.49\textwidth}
\centering
\includegraphics[width=2.2in]{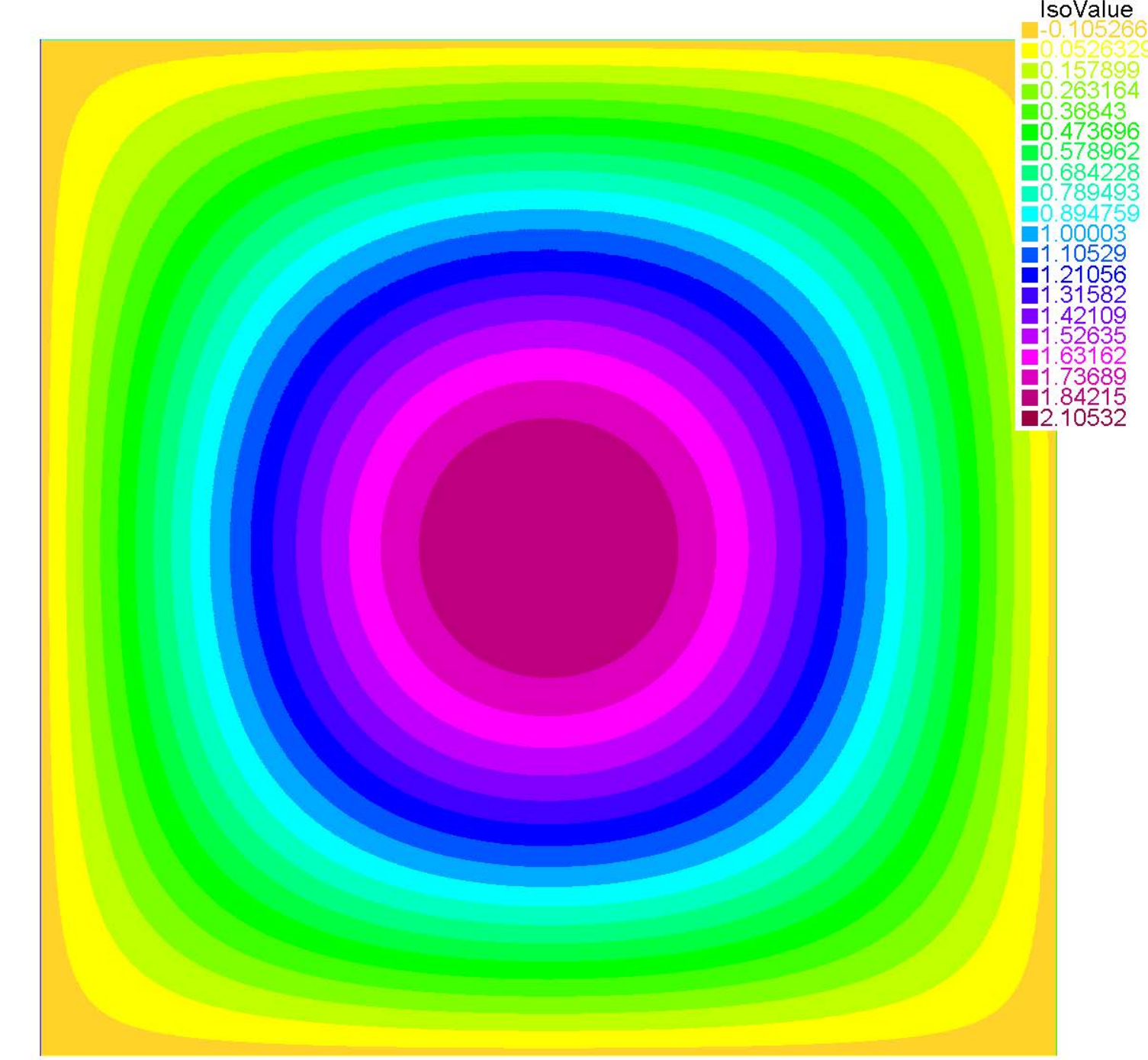}
\end{minipage}%}
\begin{minipage}[b]{0.5\textwidth}
\centering
\includegraphics[width=2.55in]{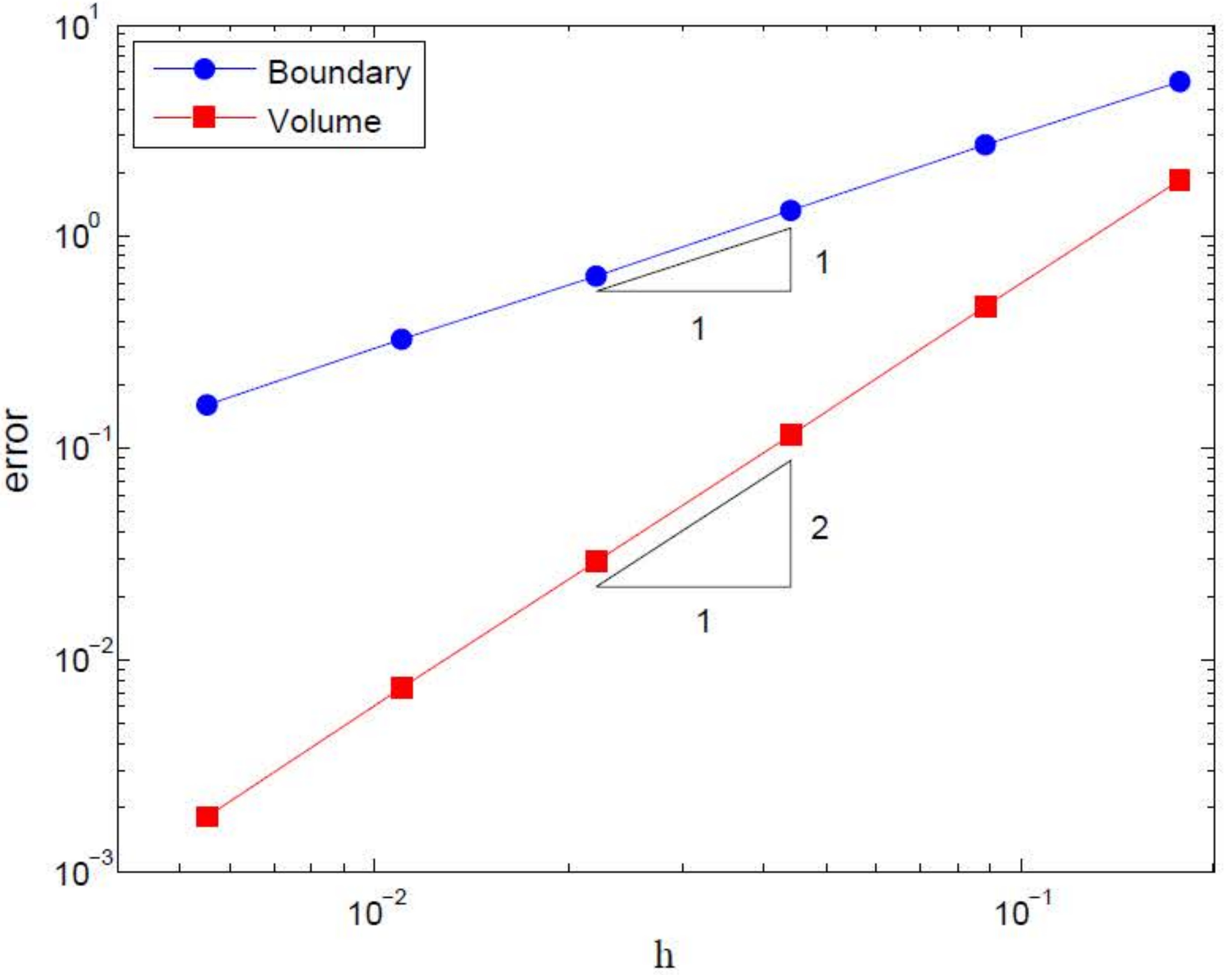}
\end{minipage}%\sidecaption
\caption{Finite element approximation of eigenfunction on square (left) and the convergence history of approximate shape gradients (right).}
\label{figSq} %% label for entire figure
\end{figure}
In Fig. \ref{figSq}, a linear finite element approximation of the first eigenfunction with $h=\sqrt{2}/256$ is illustrated, where the quadratic and linear convergence rates of approximate shape gradients in approximate operator norms agree well with the predicted results of Theorems \ref{Thm:domain}-\ref{Thm:Boundary}. For disk, the exact first eigenpair is
\begin{equation*}
\bigg(j^2_{0,1},\frac{1}{\sqrt{\pi}}\frac{1}{|J'_0(j_{0,1})|}J_0(j_{0,1}R)\bigg),
\end{equation*}
where $j_{0,1}$ is the first zero of the Bessel function $J_0$ and $R$ is the radial variable. The quasi-uniform triangulations based on uniform refinement are used. See Fig. \ref{figDisk} the computed eigenfunction under the finest mesh. The convergence of approximate shape gradients coincides well with theoretical results.
\begin{figure}[htb]%Disk
\begin{minipage}[b]{0.49\textwidth}
\centering
\includegraphics[width=2.3in]{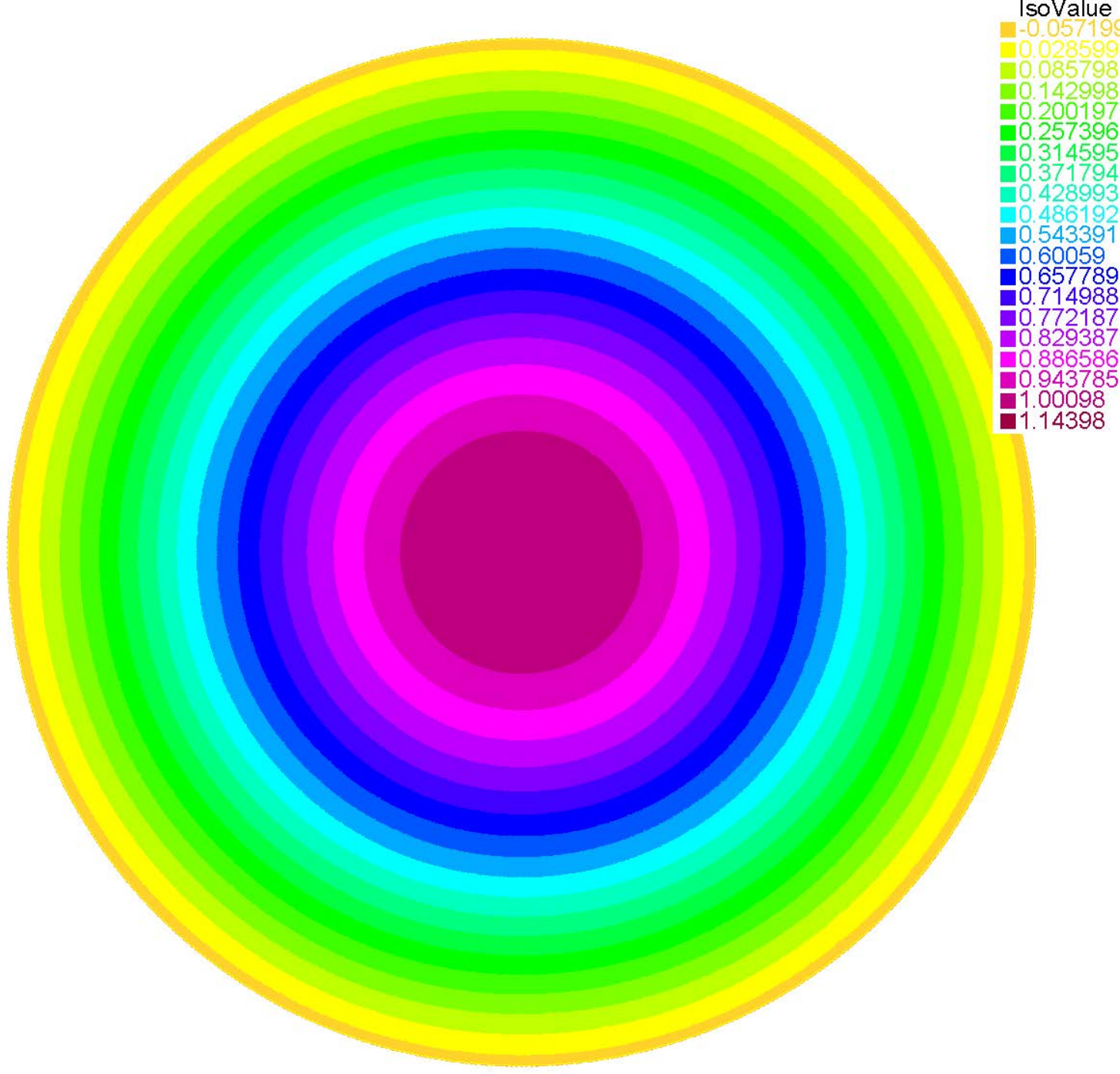}
\end{minipage}%}
\begin{minipage}[b]{0.51\textwidth}
\centering
\includegraphics[width=2.6in]{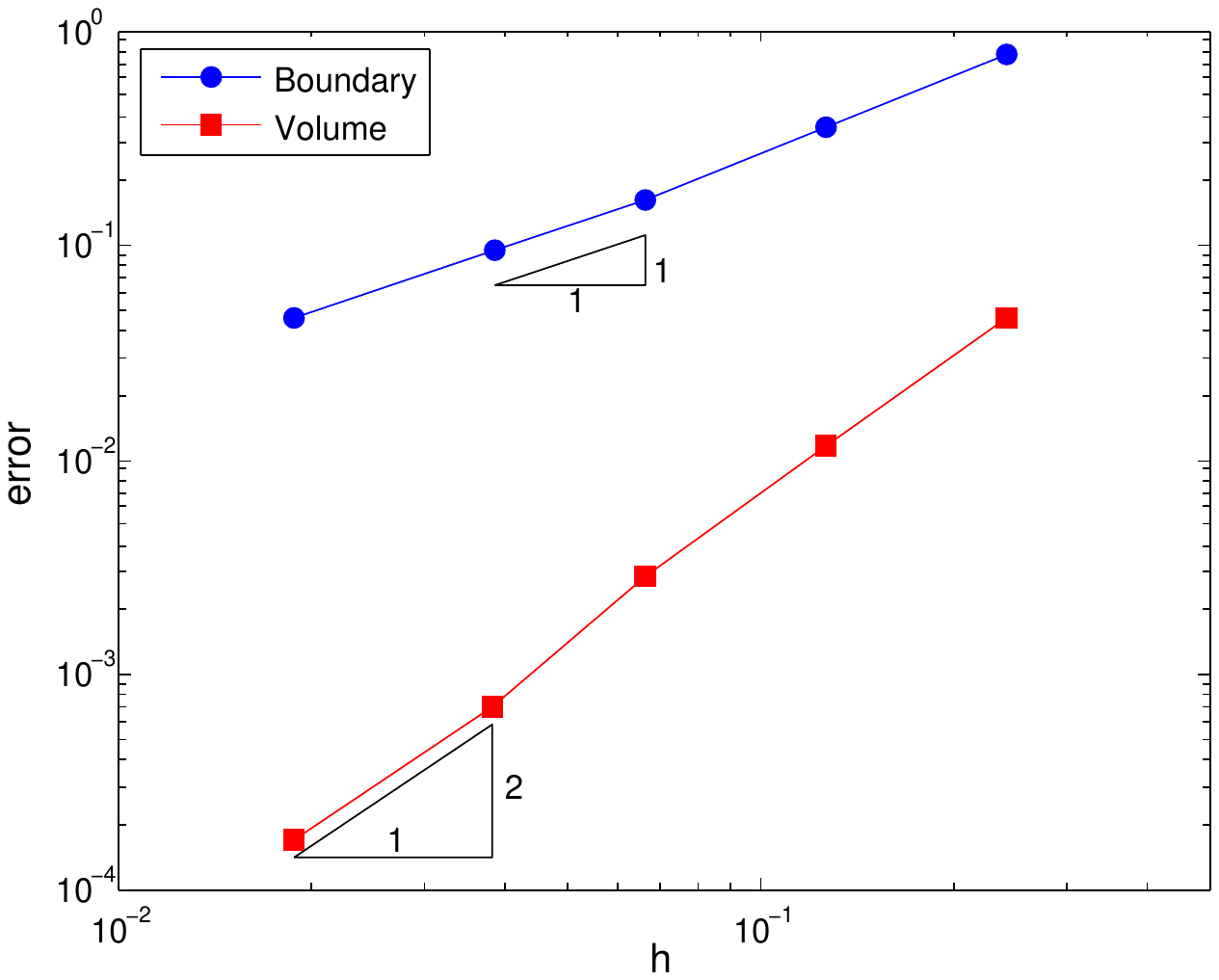}
\end{minipage}%\sidecaption
\caption{Finite element approximation of Dirichlet eigenfunction on disk (left) and convergence history of approximate shape gradients (right).}
\label{figDisk} %% label for entire figure
\end{figure}
\begin{figure}[htb]%L shape
\begin{minipage}[b]{0.49\textwidth}
\centering
\includegraphics[width=2.3in]{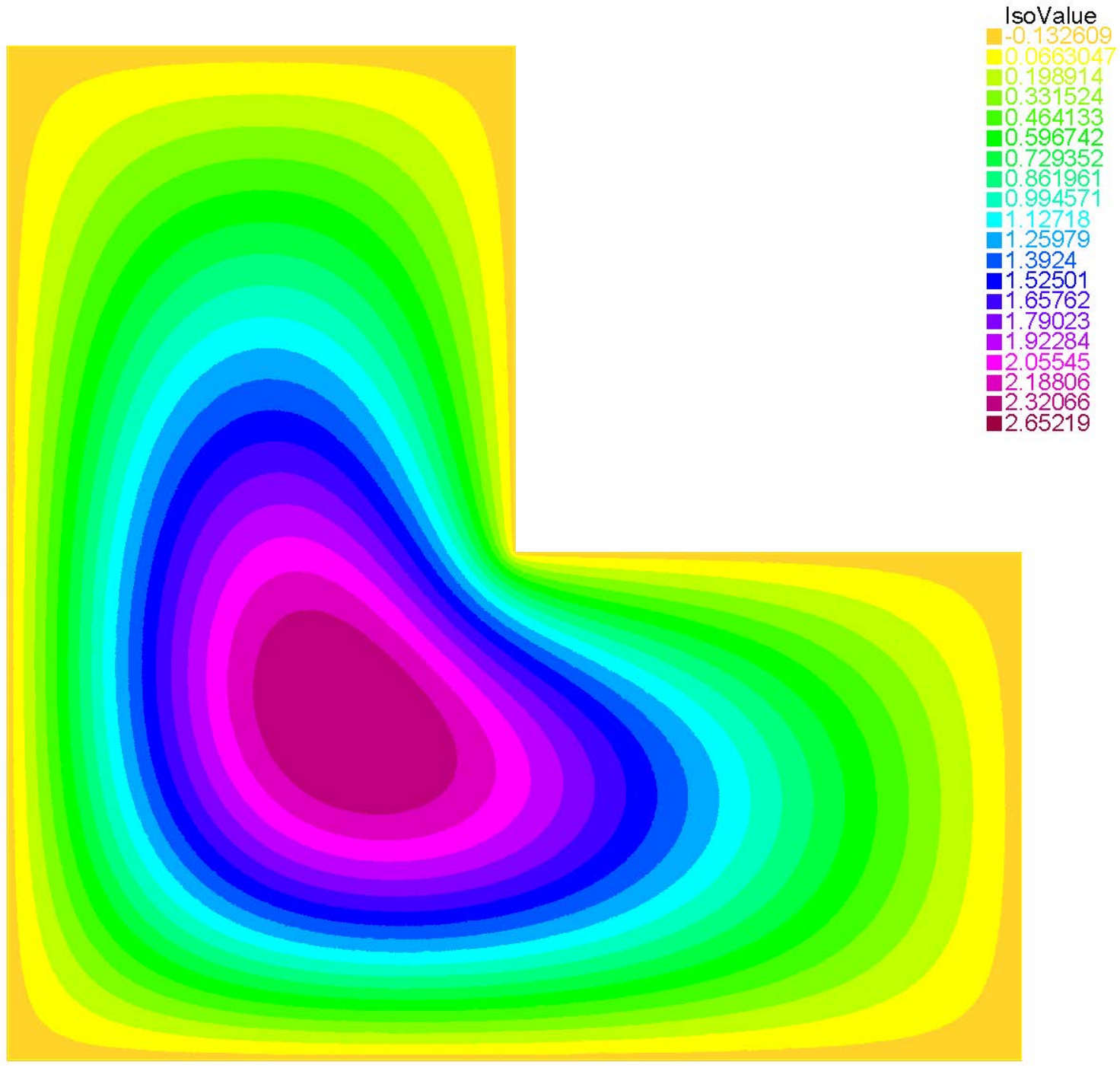}
\end{minipage}%}
\begin{minipage}[b]{0.51\textwidth}
\centering
\includegraphics[width=2.6in]{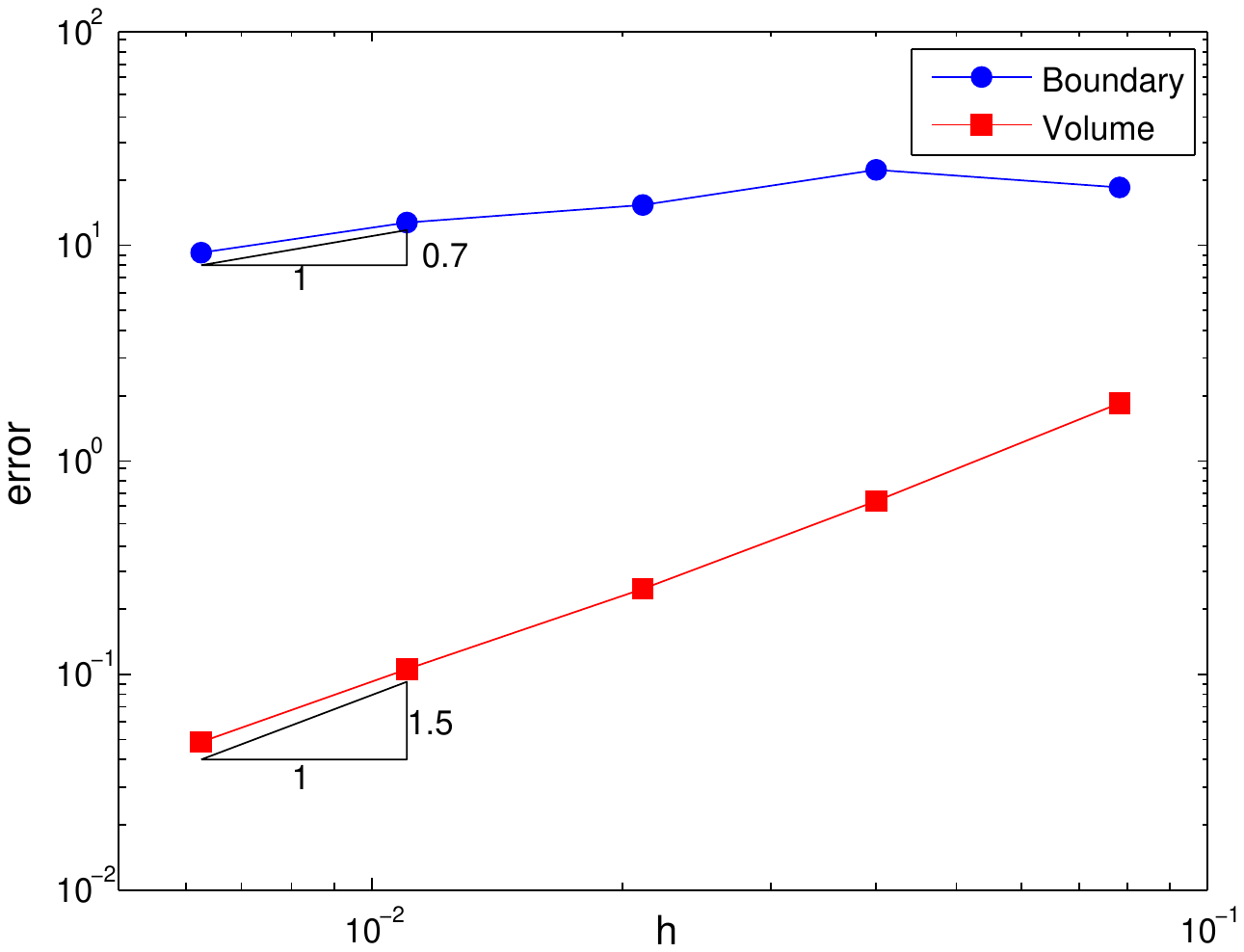}
\end{minipage}%\sidecaption
\caption{Finite element approximation of Dirichlet eigenfunction on L-shaped domain (left) and convergence history of approximate shape gradients (right).}
\label{figLshape} %% label for entire figure
\end{figure}
%
%\begin{figure}[htb]
%\begin{minipage}[b]{0.5\textwidth}
%\centering
%\includegraphics[width=2.55in]{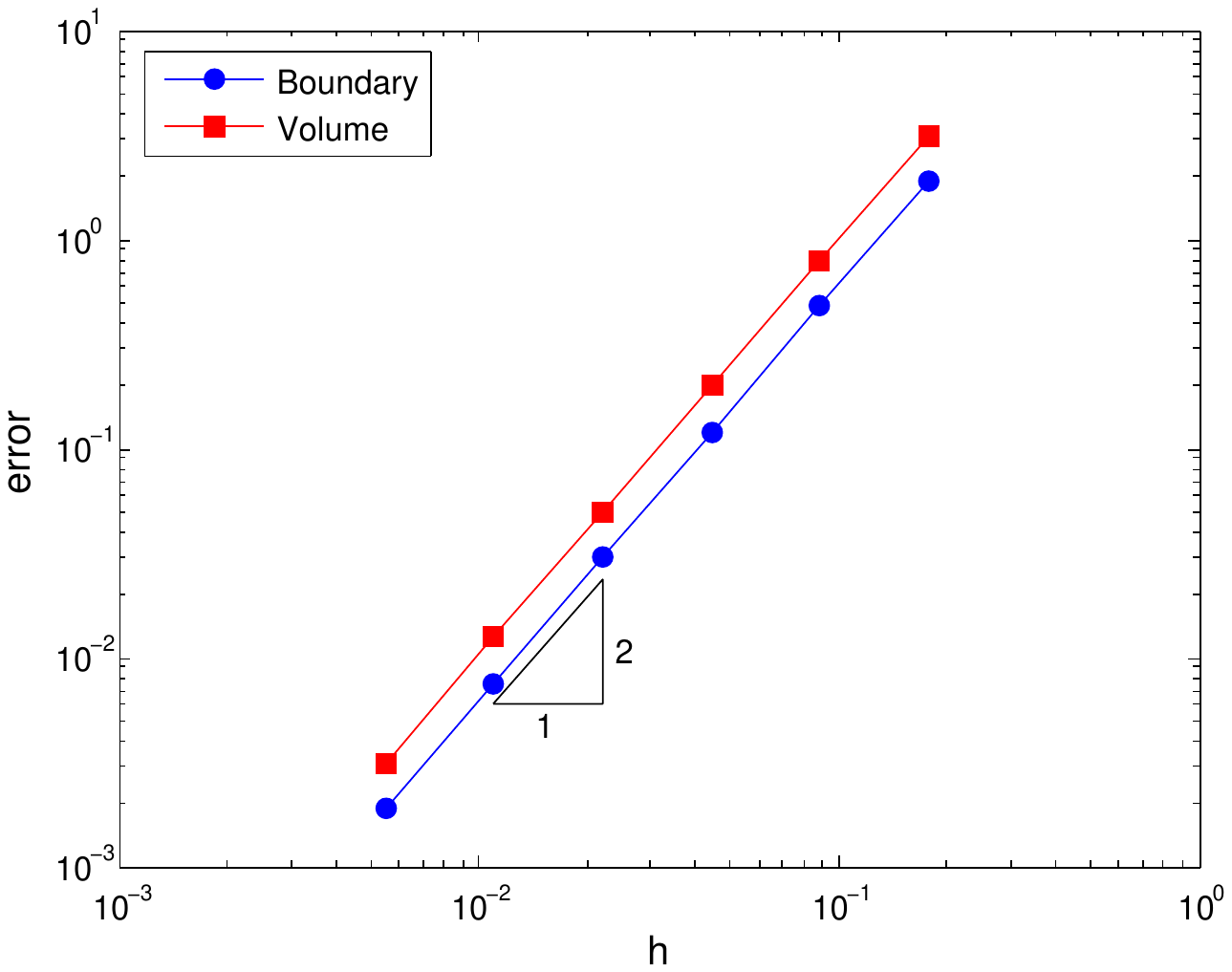}
%\end{minipage}%}
%\begin{minipage}[b]{0.5\textwidth}
%\centering
%\includegraphics[width=2.5in]{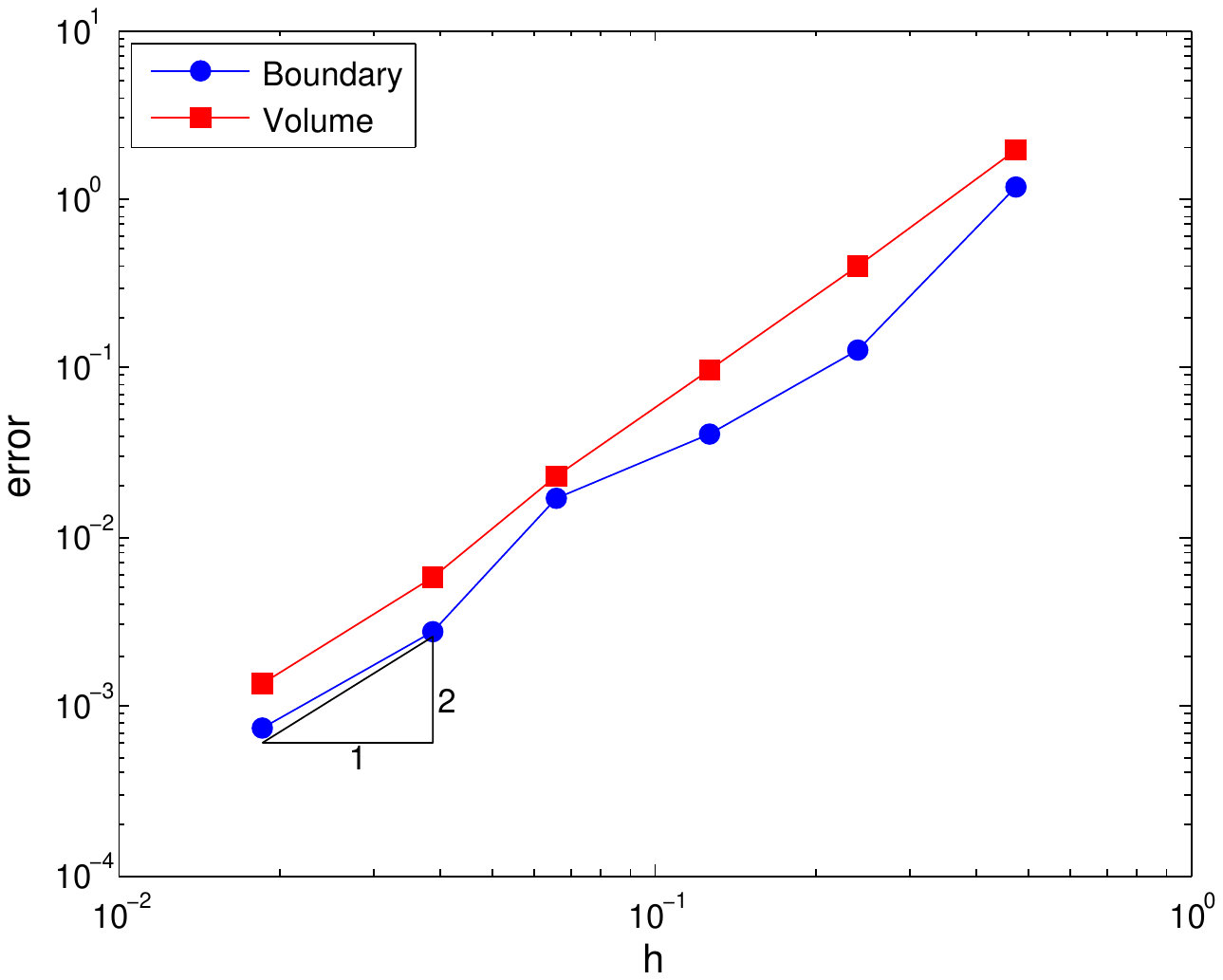}
%\end{minipage}%}
%\caption{Convergence history of approximate shape gradients: Neumann boundary condition.}
%\label{NeumannSq} %% label for entire figure
%\end{figure}
%
%\begin{figure}[htb]
%\centering
%\includegraphics[width=2.55in]{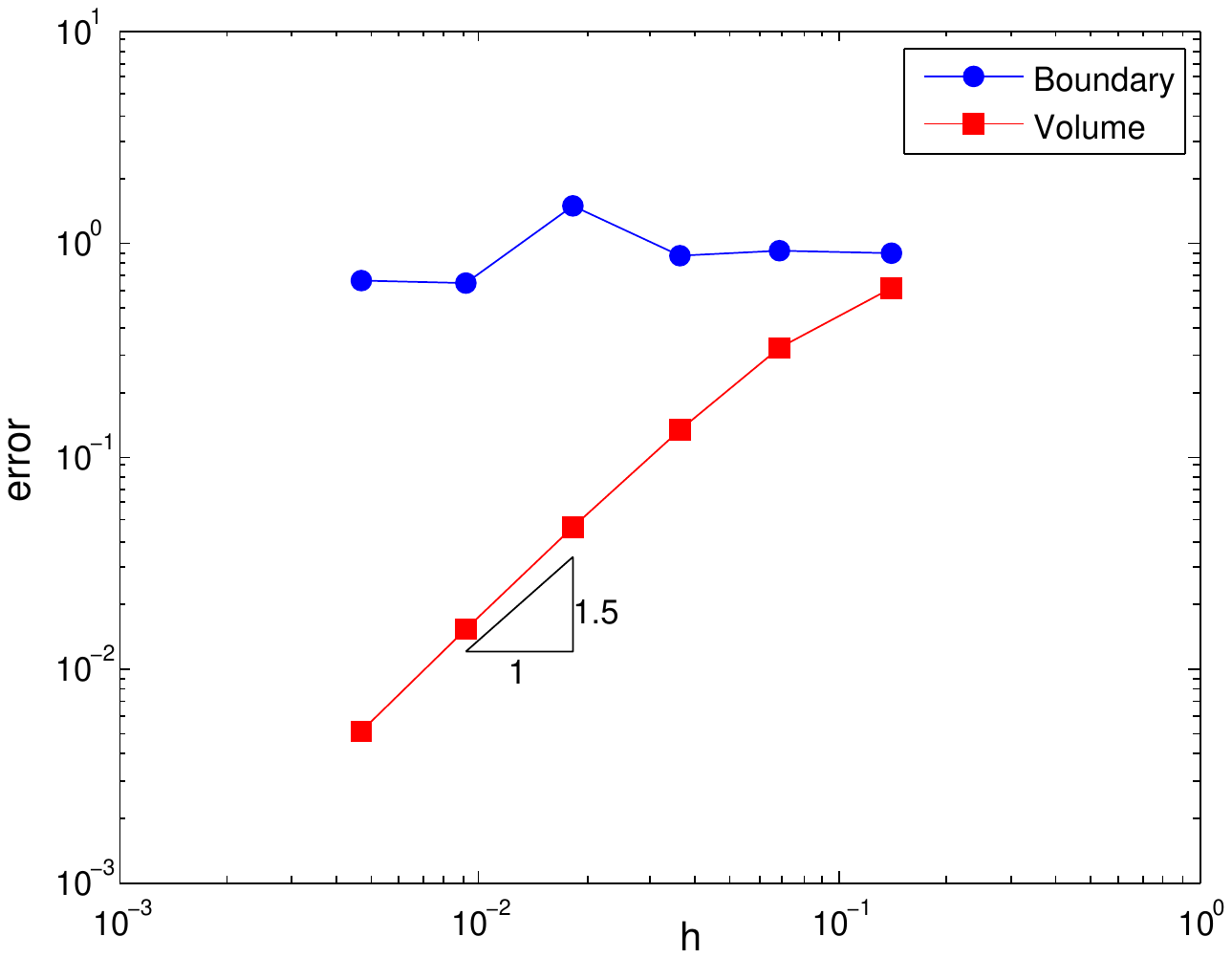}
%\caption{Convergence history of approximate shape gradients: Neumann boundary condition and the first shape differentiable eigenpair.}
%\label{NeumannBoundCondLshape} %% label for entire figure
%\end{figure}

\begin{figure}[htb]


\subfigure[Square]{
\label{Aaa} %% label for first subfigure
\begin{minipage}[htb]{0.48\textwidth}
\centering
\includegraphics[width=2.4in]{NeumannSquareConv.pdf}
\end{minipage}}
\hfill \subfigure[Disk]{
\label{Abb} %% label for second subfigure
\begin{minipage}[htb]{0.48\textwidth}
\centering
\includegraphics[width=2.4in]{NeumannDiskConv.pdf}
\end{minipage}}
\hfill \subfigure[L-shape]{
\label{Acc} %% label for third subfigure
\begin{minipage}[htb]{0.9\textwidth}
\centering
\includegraphics[width=2.5in]{NeumannLshapeConv.pdf}
\end{minipage}}%\sidecaption
\caption{Convergence history of approximate shape gradients for Neumann boundary condition.}
\label{NeumannBoundCondLshape} %% label for entire figure
\end{figure}
\begin{figure}[htb]
\subfigure[Square and Dirichlet]{
\label{AugMin1_0:a} %% label for first subfigure
\begin{minipage}[b]{0.48\textwidth}
\centering
\includegraphics[width=2.4in]{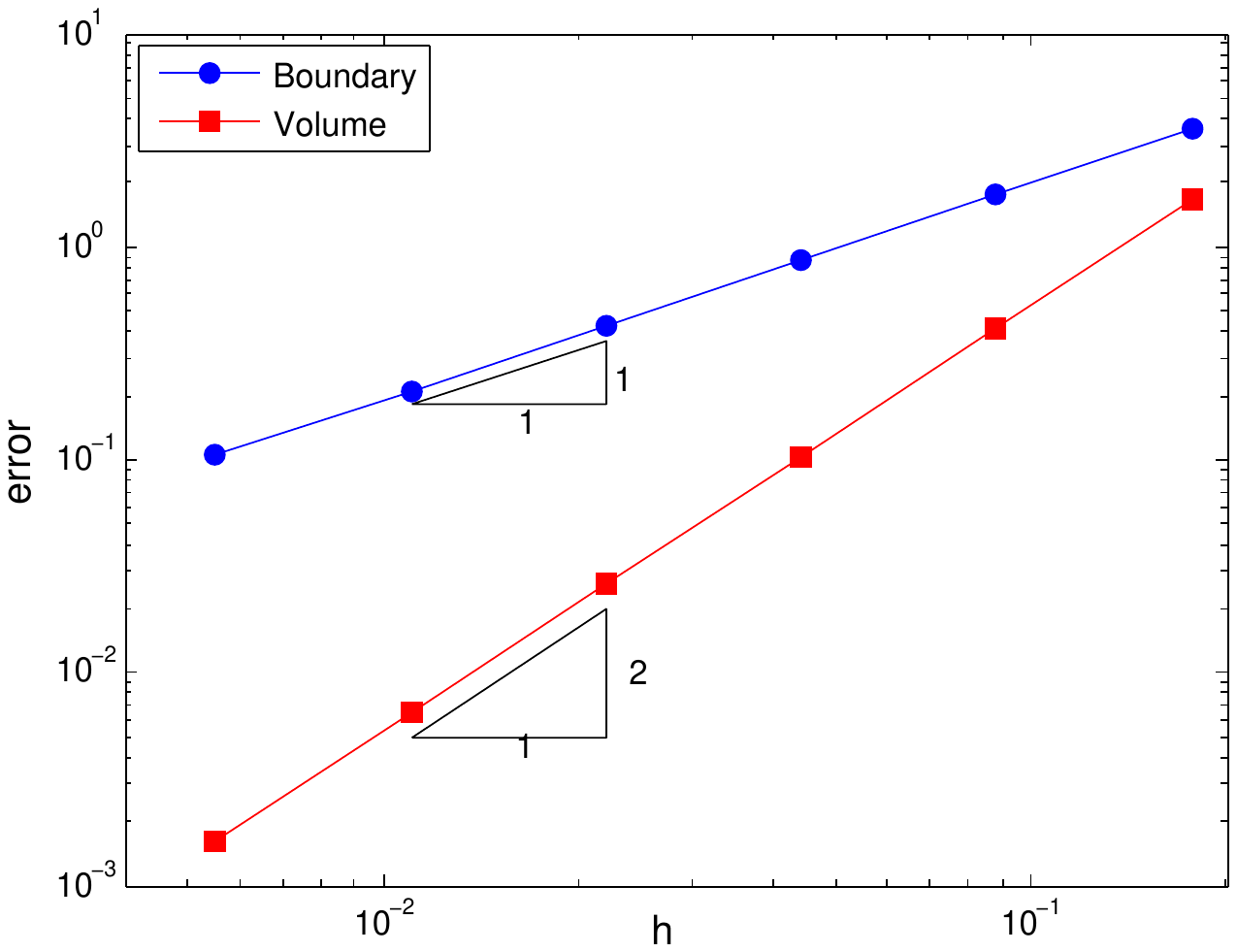}
\end{minipage}}
\hfill \subfigure[Square and Neumann]{
\label{AugMin1_40:b} %% label for second subfigure
\begin{minipage}[b]{0.48\textwidth}
\centering
\includegraphics[width=2.4in]{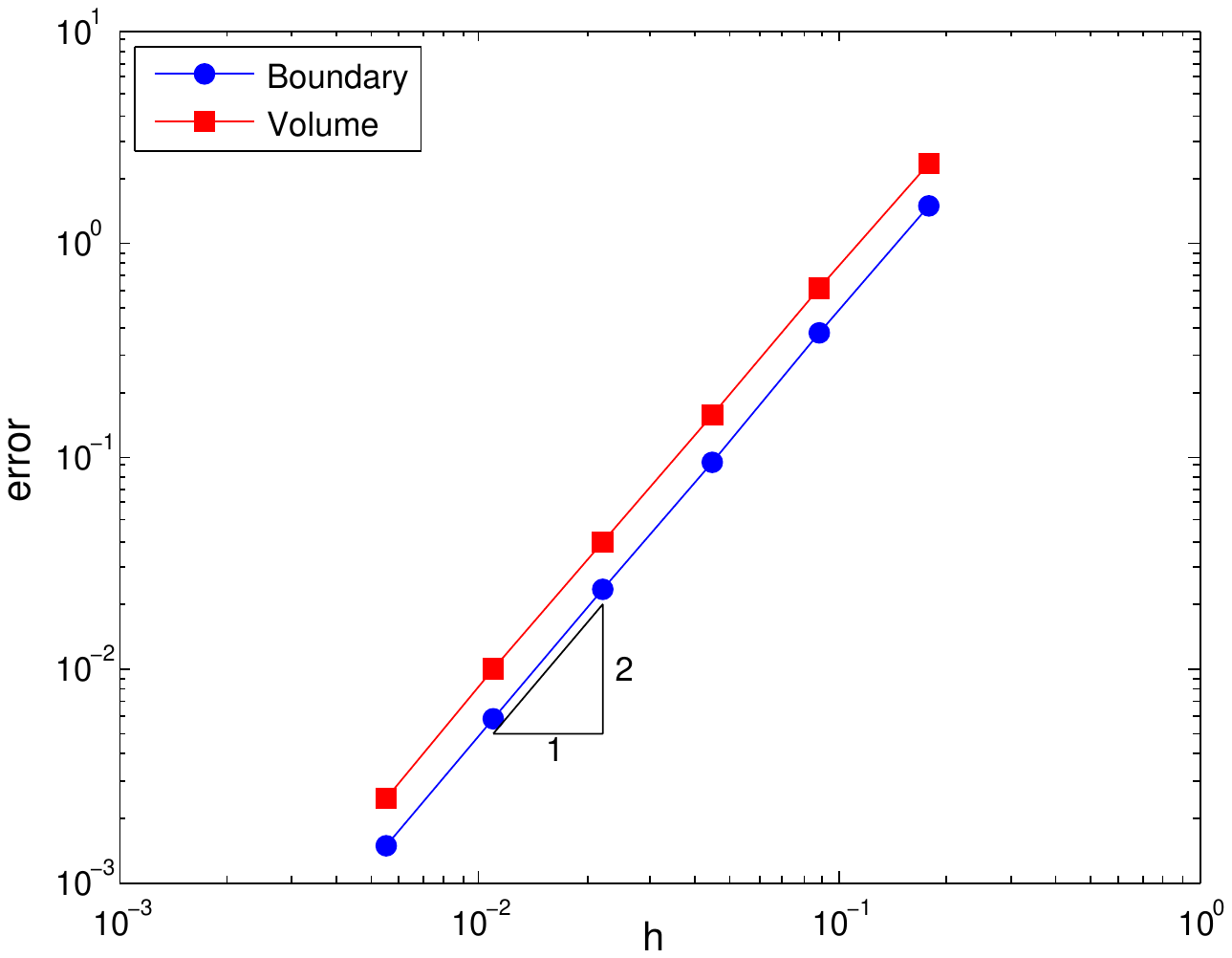}
\end{minipage}}
\hfill \subfigure[Disk and Dirichlet]{
\label{AugMin1_120:e} %% label for third subfigure
\begin{minipage}[b]{0.48\textwidth}
\centering
\includegraphics[width=2.45in]{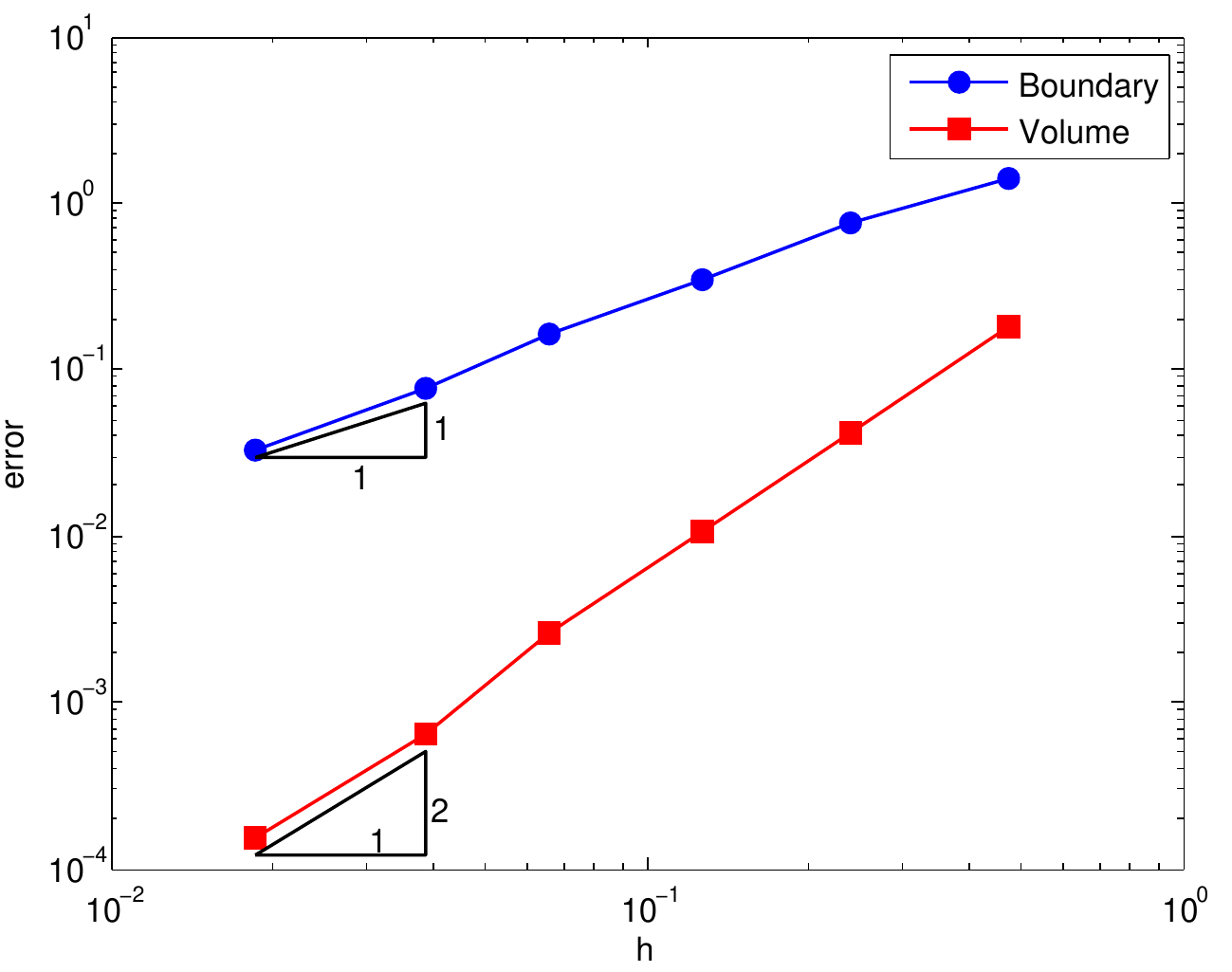}
\end{minipage}}
\hfill \subfigure[Disk and Neumann]{
\label{AugMin1_400:f} %% label for fourth subfigure
\begin{minipage}[b]{0.48\textwidth}
\centering
\includegraphics[width=2.4in]{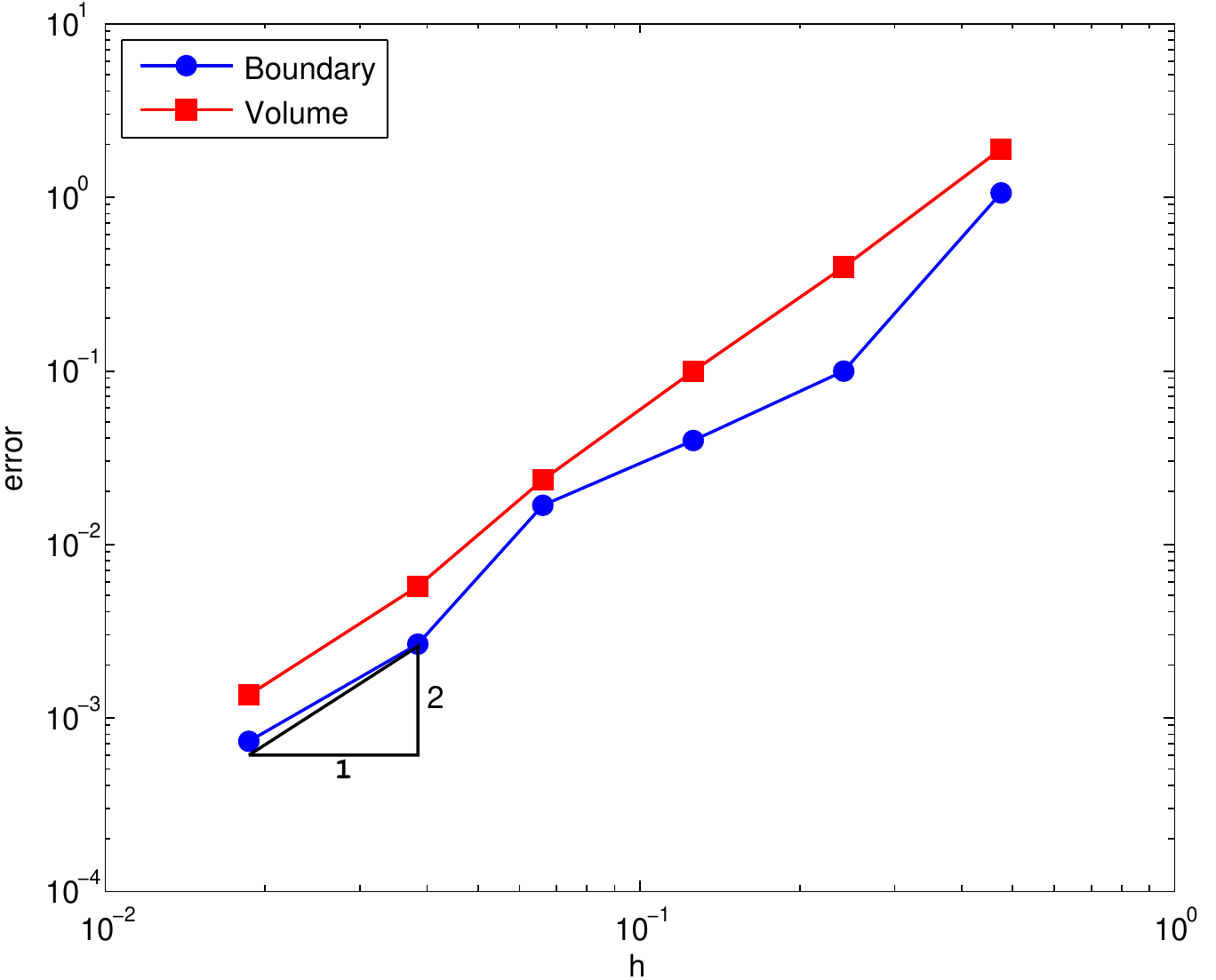}
\end{minipage}}%\sidecaption
\caption{Convergence history of approximate shape gradients for the case of simple eigenvalues: $\gamma=2$.}
\label{NeumannPolyDegree2} %% label for entire figure
\end{figure}
%{\textbf{Example 3: L-shape.}} Choose $\Omega$ to be the unit square missing the upper right quarter.
%
We use quasi-uniform meshes and compute the finite element solution on a fine mesh with 850523 degrees of freedom as the reference solution in Fig. \ref{figLshape}. The eigenfunction of L-shaped domain belongs to $H^{\frac{5}{3}-\epsilon}(\Omega)$ and thus the $H^2$-regularity for this eigenvalue problem does no hold. The approximate volume shape gradient is more accurate and converges faster than the boundary type.

\subsection{Neumann cases}
The first non-zero eigenvalue for square is multiple and we consider to optimize the second one. The exact eigenpair is
%\begin{equation*}
$(2\pi^2,2\cos{\pi x_1}\cos{\pi x_2})$.
%\end{equation*}
For disk, we consider approximate the first exact non-zero eigenpair
\begin{equation*}
\bigg(j^{\prime,2}_{0,1},\frac{1}{\sqrt{\pi}}\frac{1}{|J_0(j^\prime_{0,1})|}J_0(j^\prime_{0,1}R)\bigg).
\end{equation*}
In Figs. \ref{AugMin1_40:b} and \ref{AugMin1_400:f}, the quadratic convergence rates of the boundary integral formula is unexpected, as has been observed in \cite{PaganiniThesis} for elliptic problem with Neumann boundary condition. For L-shape domain which does not guarantee the $H^2$-regularity of the eigenfunction, we show in Fig. \ref{NeumannBoundCondLshape} that the volume integral expression is superior to the boundary integral expression in terms of both accuracy and convergence rates. Finally, see Fig. \ref{NeumannPolyDegree2} the measured errors for the two boundary conditions in both square and disk when the multivariate polynomials of degree $\gamma=2$. The accuracy and converge rates agree well with those above when $\gamma=3$, which shows that the possible independent choice of $\gamma$ on computing the operator norm on the finite-dimensional subspace of multivariate polynomials vector fields.

\section{Conclusions}
We have performed comprehensive convergence analysis for Galerkin finite element approximations of the shape gradients for Dirichlet/Neumann type elliptic eigenvalue problems in shape optimization. The {\emph a priori} error estimates have been presented for the two types of approximate shape gradients in Eulerian derivatives. The convergence analysis for the volume type is performed under not restrictive assumptions on regularities of eigenfunctions and thus domains. For the Dirichlet case, theoretical analysis as well as numerical results have shown that the volume type formula converges faster and usually offers better accuracy. For the Neumann case, however, the boundary formulation is surprisingly competitive with the volume type.

%To best of our knowledge, numerical shape optimization examples with comparisons showing more effectiveness of shape gradients are also absent in literature, even for PDE constraints of linear elliptic problems. Our eigenvalue optimization examples solved by an algorithm illustrated the more effectiveness of volume shape gradients for optimizing a Dirichlet eigenvalue. For maximizing a Neumann eigenvalue, no numerical evidence observed shows that the volume formulation is more advantageous. Based on this research work, we guess that the volume type shape gradient should be more advantageous than boundary type in shape optimization of other problems satisfying Dirichlet conditions on the free boundaries.

\section*{Acknowledgements}
This work was supported in part by Science and Technology Commission of Shanghai Municipality (No. 18dz2271000) and the National Natural Science Foundation of China under grants 11201153 and 11571115.

%    Bibliographies can be prepared with BibTeX using amsplain,
%    amsalpha, or (for "historical" overviews) natbib style.
\bibliographystyle{amsplain}
%    Insert the bibliography data here.

\appendix
\section{Eulerian derivatives of eigenvalues}
%\begin{lemma}\cite{Defour,Sok1}\label{DomainDer}
%Assume that $\psi\in  C^1(0,\tau;L^1(\mathbb{R}^d))\cap C(0,\tau;W^{1,1}(\mathbb{R}^d))$ and $\mathcal{V}\in  C([0,\tau];C(\mathbb{R}^d,\mathbb{R}^d))$, we have for the volume type functional $J(\Omega):= \int_\Omega \psi dx$
%\begin{equation*}
%{\rm d}J(\Omega;\mathcal{V})=\int_\Omega \big(\dot{\psi}(\Omega;\mathcal{V}) + \psi(0) {\rm div} \mathcal{V}(0)\big) {\rm d}x.
%\end{equation*}
%If, in addition, $\partial\Omega$ is Lipschitz, then
%\begin{equation*}
%{\rm d}J(\Omega;\mathcal{V})=\int_\Omega \big(\dot{\psi}(\Omega;\mathcal{V}) -\nabla \psi(0)\cdot \mathcal{V}\big) {\rm d}x + \int_{\partial\Omega} \psi(0) \mathcal{V}(0)\cdot n {\rm d}s.
%\end{equation*}
%\end{lemma}
Closely following \cite{HenriBook} and Section 4.2 of Chapter 10 \cite{Defour} on the first Dirichlet eigenvalue, we give a heuristic and formal derivation only for the Dirichlet case for simplicity.
The results for the Neumann case can be derived similarly.
%Taking $v=u$ in (\ref{varForm}) and using (\ref{contNorm}), we have
%\begin{equation}
%\lambda(\Omega) = \int_\Omega |\nabla u|^2 dx.
%\end{equation}
The variational formulation on $\Omega_t$ is to find $\lambda(\Omega_t)\in\mathbb{R}$, $0\neq u(\Omega_t)\in H^1_0(\Omega_t)$ such that
\begin{equation}\label{varFormt}
\int_{\Omega_t} \nabla u(\Omega_t)\cdot\nabla v{\rm d}x =\lambda(\Omega_t)\int_{\Omega_t} u(\Omega_t) v {\rm d}x\quad \forall v\in C_0^\infty(\Omega_t).
\end{equation}
From (\ref{varFormt}) and (\ref{varForm}), we have for all $\psi=v\circ T_t\in C_0^\infty(\Omega)$
\begin{equation*}\label{varFormTrans}
\int_{\Omega}\lim_{t\searrow 0}\big(B(t) \nabla (u(\Omega_t)\circ T_t)-\nabla u\big)\cdot\nabla \psi {\rm d}x =\int_{\Omega}\lim_{t\searrow 0}\big(\lambda(\Omega_t)\omega(t)u(\Omega_t) -\lambda(\Omega)u(\Omega)\big)\psi {\rm d}x,
\end{equation*}
where
\begin{equation*}
B(t) = \omega(t){\rm D}T_t^{-1}{\rm D}T_t^{-T}
\end{equation*}
with $\omega(t):={\rm det}({\rm D}T_t)$. The product rule for differentiation yields
\begin{equation}\label{VarPsi}
\int_{\Omega}\big(B'(0) \nabla u+\nabla \dot{u}\big)\cdot\nabla \psi {\rm d}x=\int_{\Omega}\big({\rm d}\lambda(\Omega;\mathcal{V})u+\lambda \dot{u}+\lambda u {\rm div}\mathcal{V}\big)\psi {\rm d}x,
\end{equation}
where
\begin{equation*}
B'(0)={\rm div}\mathcal{V} I -{\rm D}\mathcal{V}-{\rm D}\mathcal{V}^T
\end{equation*}
with $I\in \mathbb{R}^{d\times d}$ being the identity operator. Choosing $\psi=u$ in (\ref{VarPsi}), we have
\begin{equation}\label{key}
\int_{\Omega}\big(B'(0) \nabla u\cdot\nabla u+\nabla \dot{u}\cdot\nabla u\big) {\rm d}x=\int_{\Omega}\big({\rm d}\lambda(\Omega;\mathcal{V})u^2+\lambda u \dot{u}+\lambda u^2 {\rm div}\mathcal{V}\big) {\rm d}x.
\end{equation}
Then,
\begin{equation}\label{normalize}
\int_{\Omega_t} u^2(\Omega_t){\rm d}x=1
\end{equation}
corresponding to (\ref{contNorm}). Taking the derivative with respect to $t$ at $0$, we get
\begin{equation}\label{orth}
\int_{\Omega} 2u\dot{u}{\rm d}x+\int_{\Omega} u^2\, {\rm div} \mathcal{V}{\rm d}x= 0.
\end{equation}
%Since
%\begin{equation}
%\lambda(\Omega_t) = \int_{\Omega_t} |\nabla u(\Omega_t)|^2 dx
%\end{equation}
%by (\ref{varFormt}) and (\ref{normalize}),
By (\ref{contNorm}) and (\ref{orth}), (\ref{key}) implies that
\begin{equation}\label{djj}
{\rm d}\lambda(\Omega;\mathcal{V}) = \int_{\Omega} \big(B'(0)\nabla u\cdot\nabla u+\nabla u\cdot\nabla \dot{u}+\lambda u\dot{u}\big){\rm d}x.
\end{equation}
On the other hand, $\dot{u}=0$ on $\partial\Omega$ since $u(\Omega_t)$ vanishes $\partial\Omega_t$. Thus, $\dot{u}\in H^1_0(\Omega)$ if $u\in H^1_0(\Omega)$. Take $v=\dot{u}$ in (\ref{varForm}) and we obtain
\begin{equation}\label{VarformUmat}
\int_\Omega \nabla u\cdot \nabla \dot{u} {\rm d}x = \lambda \int_\Omega u \dot{u} {\rm d}x.
\end{equation}
A combination of (\ref{orth}), (\ref{djj}) and (\ref{VarformUmat}) yields the result (\ref{ThmDerEq}).

If, furthermore, $\Omega$ is convex or if it is of class $C^2$, then
$u\in H^2(\Omega)\cap H^1_0(\Omega)$ \cite{Babuska}. We can simplify the volume integral expression (\ref{ThmDerEq}) as a boundary integral expression (\ref{ShapeDer}).
Taking $v=u$ in the identity from vector calculus
\begin{equation*}
\mathcal{V}\cdot\nabla(\nabla u\cdot \nabla v) + \nabla u\cdot({\rm D}\mathcal{V}+{\rm D}\mathcal{V}^T)\nabla v = \nabla(\mathcal{V}\cdot\nabla u)\cdot\nabla v + \nabla u\cdot\nabla(\mathcal{V}\cdot\nabla v),
\end{equation*}
Eq. (\ref{ThmDerEq}) implies
\begin{equation}\label{ThmDerEqS}
\begin{aligned}
{\rm d}\lambda(\Omega;\mathcal{V}) &= \int_\Omega \Big( \mathcal{V}\cdot\nabla(|\nabla u|^2) - 2\nabla(\mathcal{V}\cdot\nabla u)\cdot\nabla u + {\rm div} \mathcal{V}(|\nabla u|^2-\lambda u^2)\Big){\rm d}x\\
&= \int_\Omega \Big( {\rm div}\big(|\nabla u|^2\mathcal{V}\big) - 2\nabla(\mathcal{V}\cdot\nabla u)\cdot\nabla u - \lambda{\rm div}\mathcal{V} u^2\Big){\rm d}x.
\end{aligned}
\end{equation}
By Green's theorem,
\begin{equation*}\label{ThmDerEqSs}
\begin{aligned}
 {\rm d}\lambda(\Omega;\mathcal{V}) &= \int_{\partial\Omega} \bigg( |\nabla u|^2\mathcal{V}_n {\rm d}s- 2\mathcal{V}\cdot\nabla u\frac{\partial u}{\partial n}\bigg) {\rm d}s + \int_\Omega\big(2\mathcal{V}\cdot\nabla u\Delta u - \lambda{\rm div}\mathcal{V} u^2\big){\rm d}x\\
&= -\int_{\partial\Omega}\bigg(\frac{\partial u}{\partial n}\bigg)^2\mathcal{V}_n {\rm d}s -\int_\Omega \big( 2\mathcal{V}\cdot \nabla u\lambda u + \lambda{\rm div}\mathcal{V} u^2\big){\rm d}x\\
&= -\int_{\partial\Omega}\bigg(\frac{\partial u}{\partial n}\bigg)^2\mathcal{V}_n {\rm d}s - \lambda\int_\Omega {\rm div}\big(u^2 \mathcal{V}\big){\rm d}x\\
&= -\int_{\partial\Omega}\bigg(\frac{\partial u}{\partial n}\bigg)^2\mathcal{V}_n {\rm d}s - \lambda\int_{\partial\Omega}u^2 \mathcal{V}_n{\rm d}s,
\end{aligned}
\end{equation*}
which implies (\ref{ShapeDer}) since $u$ vanishes on $\partial\Omega$.

\end{document}